\def\?[#1]{\textbf{[#1]}\marginpar{\Large{\textbf{??}}}}
\numberwithin{equation}{section}
\newtheorem{rem}{Remark}
\newtheorem{definition}{Definition}
\DeclareMathSymbol{\leqslant}{\mathalpha}{AMSa}{"36} 
\DeclareMathSymbol{\geqslant}{\mathalpha}{AMSa}{"3E} 
\DeclareMathSymbol{\eset}{\mathalpha}{AMSb}{"3F}     
\renewcommand{\leq}{\;\leqslant\;}                   
\renewcommand{\geq}{\;\geqslant\;}                   
\def\bi{\begin{itemize}}
\def\ei{\end{itemize}}
\def\bnum{\begin{enumerate}}
\def\enum{\end{enumerate}}
\def\<#1{\langle #1 \rangle}
\def\g{\mathbf{g}}
\newtheorem{thm}{Theorem}
\newtheorem{bij}{Bijection}
\newtheorem*{spli}{Splitting operation}
\newtheorem{cor}{Corollary}
\newtheorem{lem}{Lemma}
\Crefname{lem}{Lemma}{Lemmas}
\Crefname{thm}{Theorem}{Theorems}
\Crefname{cor}{Corollary}{Corollaries}
\Crefname{bij}{Bijection}{Bijections}
\Crefname{rem}{Remark}{Remarks}
\crefname{rem}{remark}{remarks}
\title{A new family of bijections for planar maps}
\author{Baptiste LOUF \\ IRIF, Université Paris Diderot - Paris 7 \\
Bâtiment Sophie Germain, 75205 Paris Cedex 13, France \\
blouf@irif.fr}
\begin{document}

%
\maketitle
\abstract{
We present bijections for the planar cases of two counting formulas on maps that arise from the KP hierarchy (Goulden-Jackson and Carrell-Chapuy formulas), relying on a "cut-and-slide" operation. 
This is the first time a bijective proof is given for quadratic map-counting formulas derived from the KP hierarchy. Up to now, only the linear one-faced case was known (Harer-Zagier recurrence and Chapuy-F\'eray-Fusy bijection).
As far as we know, this bijection is new and not equivalent to any of the well-known bijections between planar maps and tree-like objects.}

\vspace{0.3cm}
\footnotesize

\noindent{\bf Keywords:}  Map enumeration, bijections, planar maps, KP hierarchy


\normalsize

\section{Introduction}

\textbf{Context.} 
A map is  a combinatorial object describing the embedding up to homeomorphism of a multigraph on a compact orientable surface (see  Section \ref{sec:def} for precise definitions).
Map enumeration has been an important research topic for many years now. Tutte first enumerated planar maps and found simple formulas \cite{Tutte}, raising the natural concern of finding bijections explaining those formulas. Such bijections have since been found, mostly thanks to the family of bijections between maps and decorated trees (blossoming trees and mobiles) \cite{Cori1981,theseSch,BDG,AP,BF}. 
Some of these bijections have recently been extended to maps on surfaces of higher genus (see for instance \cite{Schaeffer,Lep}) and on non-orientable surfaces \cite{CD,BettiNO}. These bijections also opened the way to the study of random maps and their scaling limits (see e.g. \cite{CS,Miermont,LG}).

Another powerful tool for map enumeration is the KP hierarchy. The Kadomtsev-Petviashvili hierarchy is an infinite set of PDE's on functions with an infinite number of variables $(p_i)_{i\geq 1}$, which arose from mathematical physics (see \cite{Solitons},\cite{Ok} and references therein). The first equation of the hierarchy is
\[{F_{3,1}=\frac{1}{12}F_{1,1,1,1}+F_{2,2}+\frac{1}{2}(F_{1,1})^2},\]
where the indices indicate partial derivatives (e.g. $F_{3,1}=\frac{\partial^2}{\partial p_1\partial p_3}F$). 
The link between integrable hierarchies and maps was observed in mathematical physics through matrix integrals (see e.g. \cite{LZ} and references therein).
In 2008, Goulden and Jackson \cite{GJ} showed that certain generating functions for maps (with $p_i$ counting vertices of degree $i$) are solutions to the KP hierarchy. It allowed them to derive a very simple recurrence formula
\begin{equation}
\begin{split}
(n+1)T(n,g)=&4n(3n-2)(3n-4)T(n-2,g-1)+4(3n-1)T(n-1,g)\\
&+4\sum_{\substack{i+j=n-2\\i,j\geq 0}}\sum_{\substack{g_1+g_2=g\\g_1,g_2\geq 0}}  (3i+2)(3j+2)T(i,g_1)T(j,g_2)+2\mathbb{1}_{n=g=1},
\end{split}
\label{GJ}
\end{equation}
where $T(n,g)$ is the number of rooted cubic maps of genus $g$ with $3n$ edges (equivalently, triangulations — with loops and multiple edges — of genus $g$ with $3n$ edges). Then, using similar methods, Carrell and Chapuy \cite{CC} proved a recurrence formula on general (i.e. not necessarily cubic nor triangular) maps
\begin{equation}
\begin{split}
(n+1)Q_g(n,f)=&2(2n-1)Q_g(n-1,f)+2(2n-1)Q_g(n-1,f-1)\\
&+(2n-3)(n-1)(2n-1)Q_{g-1}(n-2,f)\\
&+3\sum_{\substack{i+j=n-2\\i,j\geq 0}}\sum_{\substack{f_1+f_2=f\\f_1,f_2\geq 1}}\sum_{\substack{g_1+g_2=g\\g_1,g_2\geq 0}}(2i+1)(2j+1)Q_{g_1}(i,f_1)Q_{g_2}(j,f_2),
\end{split}
\label{CC}
\end{equation}
where $Q_g(n,f)$ is the number of rooted maps of genus $g$ with $n$ edges and $f$ faces. Taking $f=1$ in \eqref{CC}, one recovers the famous Harer-Zagier recurrence \cite{HZ}.

Finding bijections for formulas arising from the KP hierarchy on maps would allow us to understand maps in greater depth, but for now it is still mainly an open problem. The only special case of \eqref{GJ} and \eqref{CC} for which a bijective proof is known is the case of one-faced maps \cite{CFF}.

\textbf{Contributions of this article.} 
In this paper, we present bijective proofs for the planar case ($g=0$) of Goulden--Jackson and Carrell--Chapuy formulas \eqref{GJ} and \eqref{CC}. Note that contrarily to the one-faced case which is linear, the planar formulas are, as in the general case, quadratic.

We actually prove a more general formula on precubic maps (see Section \ref{sec:def} for a definition) which implies the Goulden--Jackson formula \eqref{GJ}. The Carrell--Chapuy formula \eqref{CC} comes from two separate (but somehow related by their bijective proofs) formulas that were not predicted by the KP hierarchy, one of them being a generalization of the famous R\'emy bijection on plane trees \cite{Remy} to all planar maps (see formulas \eqref{cut-slide} and \eqref{remy}).

Our bijections rely on a particular exploration of the map and on a "cut-and-slide" operation. Although non-local, this operation allows us to keep track of the degrees of the vertices (see Section \ref{sec:deg}, Theorem \ref{thm_degrees}). We also take a first step towards the higher genus case by proving the Goulden-Jackson formula for two-faced maps in any genus (see Section \ref{sec:twofaces} for a sketched proof).

\textbf{Comments and related works.} 
The bijective study of planar maps is a well understood topic, especially thanks to bijections with tree-like objects. However, as far as we know, our bijection is not equivalent to those bijections (although certain similarities can be observed, such as a search of the dual map, DFS in our case, BFS in the bijections above). A related cut-and-slide operation appeared in \cite{Bettinelli}, but it is different from ours (see Remark \ref{rem} for more details).

It is a natural question to try to unify bijections in this paper (that apply to planar maps with arbitrary number of faces) and bijections in \cite{CFF} (that apply to unicellular maps with arbitrary genus) in order to prove \eqref{GJ} and \eqref{CC} in full generality. These two approaches seem different at first sight, but they both consist of taking a map with a marked special half-edge, and "cutting" at this special half-edge in order to obtain one or two maps with less edges, faces, or lower genus. In our case, those special half-edges are called \textit{discoveries}, and there are $f-1$ of them in a map with $f$ faces ; in \cite{CFF}, they are called \textit{trisections}, and there are $2g$ of them in a map of genus $g$. In \eqref{GJ}, by the Euler formula, the factor $(n+1)$ in the RHS is equal to $2g+f-1$ if there are $f$ faces, this suggests that in the general case these special half-edges still exist and there would be $2g+f-1$ of them. They would thus be a common generalization of trisections and discoveries. We already found a bijective proof of the precubic recurrence formula for two faced maps (a sketch of proof is included in Section \ref{sec:twofaces}), but it is already quite complicated and involves several cases. That is why putting it all together seems to be a challenge of its own. In another direction, we can also look at the second KP equation (which is also quadratic), which yields a recurrence formula (very similar to \eqref{precubic}) for planar maps with vertices of degrees $1$ or $4$, and this formula is also proven by our bijection (it is a specialization of the more general formula obtained in Section \ref{sec:deg}). This suggests that this exploration+cut-and-slide scheme is maybe somehow underlying in the whole KP hierarchy for maps. 

\textbf{Structure of the paper.} 
In Section \ref{sec:def}, we will give some definitions on maps and state the main results. The bijections will be described in Section \ref{sec:bij} (including the proof of the Goulden--Jackson formula), with some proofs postponed to Section \ref{sec:proof}. Section \ref{sec:deg} will present refined formulas with control over the degrees of the vertices. In Section \ref{sec:calcul}, we prove the Carrell-Chapuy formula using formulas \eqref{cut-slide} and \eqref{remy}. In Section \ref{sec:twofaces}, we give a sketch of proof of the precubic recurrences for two-faced maps in any genus.

\section{Definitions and main results}
\label{sec:def}
\label{sec:results}
\begin{definition}
A map $M$ is the data of a connected multigraph (multiple edges and loops are allowed) $G$ (called the underlying graph) embedded in a compact connected oriented surface $S$, such that $S\setminus G$ is homeomorphic to a collection of disks. The connected components of $S\setminus G$ are called the \textit{faces}. $M$ is defined up to homeomorphism. Equivalently, $M$ is the data of $G$ and a \textbf{rotation system} which describes the clockwise cyclic order of the half-edges around each vertex.
The \textbf{genus} $g$ of $M$ is the genus of $S$ (the number of "handles" in $S$). A \textbf{corner} of $M$ is an angular sector between two consecutive half-edges around a vertex. A \textbf{rooted map} is a map with a distinguished corner. A small arrow is placed in the distinguished corner, thus splitting the corner in two separate corners (left and right of the arrow).
If a rooted map $M$ of genus $g$ has $n$ edges, $v$ vertices and $f$ faces, the Euler formula links those quantities: $v-n+f=2-2g$. $M$ has $2n+1$ corners.

A \textbf{planar map} is a rooted map of genus $0$. It can be drawn on the plane with the root lying on the outer face. A \textbf{precubic map} is a map with vertices of degree $1$ or $3$ only, rooted on a vertex of degree $1$. A \textbf{leaf} is a vertex of degree $1$ that is not the root.
\end{definition}

From now on, all the maps we consider will be rooted, and planar unless stated otherwise.

\begin{figure}
\captionsetup{width=0.8\textwidth}
\centering
\includegraphics[scale=0.5]{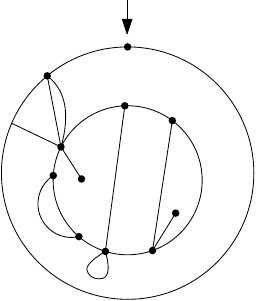}
\caption{A planar map}
\label{map}
\end{figure}

In Section \ref{sec:bij}, we will introduce the concept of discovery, which are special edges obtained by an exploration of the map. An important feature is that a map with $f$ faces has $f-1$ discoveries.

\begin{thm}
\label{cs}
There is a bijection between planar maps $M$ with a marked discovery and pairs of planar maps $(M_1,M_2)$ such that $M_1$ has a marked vertex and $M_2$ has a marked leaf. This bijection preserves the total number of edges and faces. This gives the following formula on planar maps
\begin{equation}
(f-1)Q(n,f)=\sum_{\substack{i+j=n-1\\i,j\geq 0}}\sum_{\substack{f_1+f_2=f\\f_1,f_2\geq 1}}v_1Q(i,f_1)(2j+1)Q(j,f_2),
\label{cut-slide}
\end{equation}
where $Q(n,f)$ is the number of planar maps with $n$ edges and $f$ faces, and $v_1$ counts the number of vertices in the first map (i.e. $v_1=2+i-f_1$).

This bijection adapts to precubic maps, and the marked vertex of $M_1$ is now a marked leaf (see Remark \ref{rem_precubic}). This yields
\begin{equation}
(f-1)\alpha(n,f)=\sum\limits_{i+j=n}\sum\limits_{f_1+f_2=f}\alpha^{(1)}(i,f_1)\alpha^{(1)}(j,f_2),
\label{precubic}
\end{equation}
where $\alpha(n,f)$ counts the number of (planar) precubic maps with $n$ edges and $f$ faces, and $\alpha^{(1)}(n,f)$ counts the number of precubic maps with $n$ edges and $f$ faces and a marked leaf. 
\end{thm}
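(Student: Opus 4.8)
The plan is to argue bijectively, since the left-hand side $(f-1)Q(n,f)$ already counts pairs $(M,d)$ consisting of a planar map $M$ with a marked discovery. The first step is to make the notion of discovery precise: I would run a depth-first search on the \emph{dual} of $M$ (exploring the faces, crossing edges) started at the root, and call an edge a \emph{discovery} when it is the edge through which a face is reached for the first time. Since discoveries then play the role of the edges of a spanning tree of the dual graph on the $f$ faces, there are exactly $f-1$ of them, which is precisely what identifies $(f-1)Q(n,f)$ as the number of pairs $(M,d)$. The DFS simultaneously records canonical local data (an orientation and a distinguished side) at each discovery, which I expect to be the key to reversibility.

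Next I would set up the forward map, the cut-and-slide. Given $(M,d)$, the discovery $d$ is a dual-tree edge, so severing its role splits the dual tree into two subtrees carrying $f_1$ and $f_2$ faces, with $f_1+f_2=f$ and $f_1,f_2\geq 1$. The slide then reorganises the half-edges around the cut so that each side closes up into a bona fide rooted planar map: the side carrying the root becomes $M_1$ and acquires a \emph{marked vertex} at the cut site, while the other side becomes a rooted planar map with a new pendant edge, i.e.\ a \emph{marked leaf}. I would keep careful track of the bookkeeping: the faces split as $f_1+f_2=f$, the edges split as $i+(j+1)=n$ where the extra edge is the pendant one created by the slide, and Euler's relation in genus $0$ gives $v_1=2+i-f_1$ for the marked-vertex factor. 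The factor $(2j+1)$ is then the standard correspondence between a planar map with $j$ edges together with one of its $2j+1$ marked corners and a planar map with a marked leaf, obtained by grafting a pendant edge in that corner; this rewrites the $M_2$-factor and turns the edge split into $i+j=n-1$.

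The heart of the argument is the inverse operation, the glue. Given $(M_1,M_2)$ with a marked vertex and a marked leaf, I would delete the marked leaf of $M_2$ to expose the corner where it sat, undo the slide, and reconnect the two maps through a new edge emanating from the marked vertex of $M_1$; this new edge is the candidate recovered discovery. I expect this to be the main obstacle, precisely because the slide is \emph{non-local}: proving that glue and cut-and-slide are two-sided inverses requires showing that genus $0$ is preserved in both directions, that the DFS of the reconstructed map indeed returns the new edge as the marked discovery $d$, and that every \emph{other} discovery is left intact and is correctly reproduced when the two exploration orders on $M_1$ and $M_2$ are interleaved back into a single exploration of $M$. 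Controlling how the global exploration order is reassembled from the two pieces is the delicate point, and is exactly where the local data recorded by the DFS at the cut is needed.

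Finally I would read off the statements. Summing the two factor-counts over the edge split $i+j=n-1$ and the face split $f_1+f_2=f$ yields \eqref{cut-slide}. For the precubic version I would verify that the cut-and-slide preserves the degree constraint (all vertices of degree $1$ or $3$); since the cut acts at a degree-$3$ vertex, the marked vertex of $M_1$ is itself turned into a marked \emph{leaf}, so both factors become $\alpha^{(1)}$, and the degree bookkeeping shifts the edge split to $i+j=n$, giving \eqref{precubic}. The only genuinely new work here is checking that precubicness is preserved in both directions.
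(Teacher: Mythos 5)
Your proposal has a genuine gap at its core: the forward map you describe is not a well-defined operation. You treat the marked discovery $d$ as if severing it splits the map into two pieces whose face counts can be read off the two dual subtrees, with the ``slide'' being a local reorganisation of half-edges ``around the cut''. But in a planar map an edge disconnects the map if and only if both of its sides lie on the same face, and a discovery by definition has its two sides on distinct faces; so cutting or opening the marked discovery never separates the map --- it merges two faces of a map that stays connected, and the two dual subtrees are not carried by disjoint pieces of the surface. The entire content of the theorem is the machinery needed to get around this. The paper introduces \emph{disconnecting} discoveries (Definition~\ref{disco}), at which a \emph{vertex} split (not an edge cut) genuinely separates the map; proves that a discovery leaving the outer face is always disconnecting (Lemma~\ref{lemma}), which guarantees termination; and, when the marked discovery $d=d_0$ is not disconnecting, follows the chain of \emph{previous} discoveries $d_0,d_1,\dots$ until a disconnecting one $d_k$ is reached, splits there, and re-matches the opened buds and stems with a shift of one --- that shift is the ``slide'', and it is what transports the marking from $d$ to the marked leaf. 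None of this appears in your argument: you defer all non-locality to the inverse map, but the forward map already does not exist without it. Symmetrically, your inverse misses the propagation step: when the marked leaf of $M_2$ is not in the outer face, one must first open the chain of discoveries from its face up to the outer face before any regluing can be done (Bijection~\ref{CS2}).

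A secondary consequence is that your face bookkeeping is wrong. Since the split happens at $d_k$ rather than at $d$, the faces of $M_2$ are not the dual subtree hanging below $d$: the chain faces $F_0,\dots,F_{k+1}$ are merged and re-cut into $G_0,\dots,G_k$ and $G^*$, and faces attached along the portion $B_{k+1}$ of $F_{k+1}$ also land in $M_2$. Only the totals $f_1+f_2=f$ and $i+(j+1)=n$ survive, and establishing even that --- together with injectivity --- requires the face-word analysis of Section~\ref{sec:proof}, whose key point is that the re-closed edges are again discoveries forming a previous-discovery chain in the new map (this is where the cut-edge argument and Lemma~\ref{lemma} are used). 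Your reading of the factors $v_1$ and $(2j+1)$, and of the precubic specialisation where the split degree-$3$ vertex turns both markings into leaves, is correct, but these are the routine parts of the statement.
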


Retracting a leaf into a corner (see Figure \ref{feuille} left) is a classical operation that gives a bijection between planar maps $M$ with a marked leaf and planar maps $M'$ with a marked corner, such that $M'$ has as many faces and one edge less than $M$. This operation, along with a more subtle retraction operation on internal nodes, is at the core of the Rémy bijection on plane trees \cite{Remy}. We generalize the operation on internal nodes to all planar maps in the following:
\begin{thm}[Generalized R\'emy bijection]
\label{re}

There is a bijection between planar maps $M$ with a marked node (i.e. a vertex that is not a leaf) on the one hand, and the union of planar maps $M'$ with a marked corner and pairs of planar maps $(M_1,M_2)$ such that they both have a marked vertex on the other hand. The total number of faces is preserved, and the total number of edges decreases by one.

By this bijection, there are \[(2n-1)Q(n-1,f)+\sum_{\substack{i+j=n-1\\i,j\geq 0}}\sum_{\substack{f_1+f_2=f\\f_1,f_2\geq 1}}v_1Q(i,f_1)v_2Q(j,f_2)\] planar maps with $n$ edges, $f$ faces and a marked node. The retraction operation (of Figure \ref{feuille}) implies that there are  $(2n-1)Q(n-1,f)$ planar maps with $n$ edges, $f$ faces and a marked leaf. This yields the following equation:
\begin{equation}
vQ(n,f)=2(2n-1)Q(n-1,f)+\sum_{\substack{i+j=n-1\\i,j\geq 0}}\sum_{\substack{f_1+f_2=f\\f_1,f_2\geq 1}}v_1Q(i,f_1)v_2Q(j,f_2),
\label{remy}
\end{equation}
where the "$v$-variables" count the number of vertices (i.e. $v=2+n-f$, $v_1=2+i-f_1$ and $v_2=2+j-f_2$). This holds for $n>0$.
\end{thm}
Taking $n=3m+2$ and $f=m+2$ in \eqref{precubic}, one recovers
\begin{cor}[Goulden-Jackson \eqref{GJ} planar case]
\label{cor}
\[(n+1)T(n)=4(3n-1)T(n-1)+4\sum_{\substack{i+j=n-2\\i,j\geq 0}}  (3i+2)(3j+2)T(i)T(j),\]
where $T(n)$ counts the number of planar cubic maps with $3n$ edges.
\end{cor}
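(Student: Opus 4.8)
The plan is to obtain \cref{cor} purely by specializing \eqref{precubic} at $n=3m+2$, $f=m+2$ and translating the precubic counts $\alpha,\alpha^{(1)}$ into the cubic count $T$. First I would record the vertex bookkeeping: by the degree-sum relation $v_1+3v_3=2n$ together with the planar Euler formula $v_1+v_3=2+n-f$, a precubic map with $n$ edges and $f$ faces has exactly $v_1=(6+n-3f)/2$ vertices of degree $1$. For $n=3m+2$, $f=m+2$ this gives $v_1=1$, so such a map is leafless (its unique degree-$1$ vertex is the root). Pruning the root --- deleting it together with its pendant edge and smoothing the resulting degree-$2$ vertex --- removes two edges, preserves the faces and the genus, and I would check it is a bijection onto rooted cubic maps with $3m$ edges (the flag vacated by the pendant records the rooting). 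This identifies $\alpha(3m+2,m+2)=T(m)$, so the left-hand side of \eqref{precubic} becomes $(m+1)T(m)$, matching the left-hand side of \cref{cor}.

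Next I would analyze the right-hand side of \eqref{precubic}. The key observation is that for any splitting $a+b=3m+2$, $f_1+f_2=m+2$ the two factors satisfy $v_1^{(a)}+v_1^{(b)}=(12+(a+b)-3(f_1+f_2))/2=4$. Since $\alpha^{(1)}$ requires a markable leaf, a factor vanishes unless its $v_1\geq 2$; with the sum equal to $4$ this forces $v_1^{(a)}=v_1^{(b)}=2$, i.e.\ $a=3f_1-2$ and $b=3f_2-2$. Thus only the diagonal terms survive and the double sum collapses to $\sum_{f_1+f_2=m+2}\alpha^{(1)}(3f_1-2,f_1)\,\alpha^{(1)}(3f_2-2,f_2)$.

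It then remains to evaluate $\alpha^{(1)}(3i+4,i+2)$, writing $f_1=i+2$. A map counted here has $v_1=2$: the root and a single leaf, necessarily the marked one. Pruning that leaf yields a leafless precubic map with $3i+2$ edges and $i+2$ faces, hence $\alpha(3i+2,i+2)=T(i)$ rooted cubic maps by the first step; conversely the leaf is reinserted by subdividing one of the $3i+2$ edges and attaching a pendant on one of its two sides, giving $\alpha^{(1)}(3i+4,i+2)=2(3i+2)T(i)$ for $i\geq 0$, together with the boundary value $\alpha^{(1)}(1,1)=1$ (the single root--leaf edge, $i=-1$). Substituting $i=f_1-2$, $j=f_2-2$ so that $i+j=m-2$, the terms with $i=-1$ or $j=-1$ contribute $2\cdot 2(3m-1)T(m-1)=4(3m-1)T(m-1)$, while the terms with $i,j\geq 0$ contribute $4\sum_{i+j=m-2}(3i+2)(3j+2)T(i)T(j)$; assembling these reproduces the right-hand side of \cref{cor}.

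The main obstacle is pinning down the two normalizing factors exactly: the claim that leafless precubic maps correspond bijectively, with factor $1$, to rooted cubic maps, and the count $2(3i+2)$ of reinsertion positions for the leaf. These are precisely what produce the constants $(m+1)$, $4(3m-1)$ and $4(3i+2)(3j+2)$, so the care lies in matching the rooting convention (a marked corner versus the flag vacated by a pruned degree-$1$ vertex, which agree in number since both equal twice the edge count), in verifying that pruning is genuinely invertible, and in treating the boundary case $i=-1$ correctly.
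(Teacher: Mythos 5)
Your proposal is correct and follows essentially the same route as the paper: the paper obtains \cref{cor} by substituting $n=3m+2$, $f=m+2$ into \eqref{precubic} and using the precubic leaf-retraction of \cref{feuille} (losing two edges) together with the identification of leafless precubic maps with rooted cubic maps. Your write-up simply makes explicit the bookkeeping the paper leaves implicit (the count $v_1=(n+6-3f)/2$ forcing both factors to have exactly one markable leaf, and the boundary term $\alpha^{(1)}(1,1)=1$), all of which checks out.
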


\begin{rem}
The term $4(3n-1)T(n-1)$ corresponds to $i=f_1=1$ and $j=f_2=1$ in the summation of \eqref{precubic} (because $\alpha^{(1)}(1,1)=1$).
\end{rem}

Combining  \eqref{cut-slide} and \eqref{remy} and doing some manipulations, one recovers
\begin{cor}[Carrell-Chapuy \eqref{CC} planar case]
\begin{equation}
\label{planar}
\begin{split}
\begin{split}
(n+1)Q(n,f)=&2(2n-1)Q(n-1,f)+2(2n-1)Q(n-1,f-1)\\
&+3\sum_{\substack{i+j=n-2\\i,j\geq 0}}\sum_{\substack{f_1+f_2=f\\f_1,f_2\geq 1}}(2i+1)(2j+1)Q(i,f_1)Q(j,f_2).
\end{split}
\end{split}
\end{equation}
\end{cor}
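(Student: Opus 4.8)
The plan is to derive \eqref{planar} by \emph{adding} the two identities of \cref{cs} and \cref{re}, reducing the statement to a single convolution identity, and then establishing that identity with generating functions. Throughout, the identities are understood for $n\geq 1$ (the range of the recurrence).

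\emph{Step 1: the backbone.} First I would simply add \eqref{cut-slide} and \eqref{remy}. Using Euler's relation $v=2+n-f$, the left-hand coefficients combine as $(f-1)+(2+n-f)=n+1$, while the two quadratic sums run over the same index set $i+j=n-1$, $f_1+f_2=f$ and share the factor $v_1Q(i,f_1)=(2+i-f_1)Q(i,f_1)$. Since $(2j+1)+v_2=(2j+1)+(2+j-f_2)=3j+3-f_2$, this gives
\[
(n+1)Q(n,f)=2(2n-1)Q(n-1,f)+\sum_{\substack{i+j=n-1\\f_1+f_2=f}}(2+i-f_1)(3j+3-f_2)Q(i,f_1)Q(j,f_2).
\]
Comparing with \eqref{planar}, the corollary becomes equivalent to the convolution identity
\[
\sum_{\substack{i+j=n-1\\f_1+f_2=f}}(2+i-f_1)(3j+3-f_2)Q(i,f_1)Q(j,f_2)=2(2n-1)Q(n-1,f-1)+3\!\!\sum_{\substack{i+j=n-2\\f_1+f_2=f}}\!\!(2i+1)(2j+1)Q(i,f_1)Q(j,f_2),
\]
which I will call $(\star)$. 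The essential feature is that the left sum ranges over $i+j=n-1$ while the last one ranges over $i+j=n-2$, so $(\star)$ is a genuine statement and not a termwise comparison.

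\emph{Step 2: generating functions.} To handle $(\star)$ I would pass to the series $\Phi(x,y)=\sum_{n\geq0,\,f\geq1}Q(n,f)x^ny^f$, and introduce the vertex-weighted series $V=2\Phi+x\Phi_x-y\Phi_y$ (since $v=2+n-f$) and the corner-weighted series $E=\Phi+2x\Phi_x$ (a map has $2n+1$ corners). A convolution over $i+j=n-1$ corresponds to multiplication by $x$, and the two factors in \eqref{cut-slide} are vertex- and corner-weighted while those in \eqref{remy} are both vertex-weighted, so the two formulas translate into
\[
y\Phi_y-\Phi=xVE,\qquad V=y+2xE+xV^2,
\]
the isolated $y$ being the $n=0$ contribution excluded by \eqref{remy}; likewise \eqref{planar} becomes $x\Phi_x+\Phi=y+2xE+2xyE+3x^2E^2$. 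Adding the two relations above and substituting $V=2\Phi+x\Phi_x-y\Phi_y$ collapses the left-hand side to $x\Phi_x+\Phi-y$, so $(\star)$ reduces to the requirement $xVE+xV^2=2xyE+3x^2E^2$, that is, after dividing by $x$, to the compact identity
\[
V(V+E)=2yE+3xE^2,
\]
which I will call $(\star\star)$.

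\emph{Step 3: the main obstacle.} Proving $(\star\star)$ is the crux and the hard part. The difficulty is that $(\star\star)$ is \emph{not} a formal algebraic consequence of the two relations in Step 2: substituting $xV^2=V-y-2xE$ and $xVE=y\Phi_y-\Phi$ merely reproduces the translated form of \eqref{planar}, so every purely algebraic attempt runs in a circle. The identity genuinely uses that $V$ and $E$ are fixed differential combinations of the \emph{single} series $\Phi$. I therefore expect the decisive step to be a differential elimination: solve the definitions of $V,E$ together with \eqref{cut-slide} for $\Phi_x$ and $\Phi_y$ as expressions in $V,E$, and then impose the compatibility $\partial_y\Phi_x=\partial_x\Phi_y$ (equivalently, differentiate one of the functional equations once). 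This furnishes the one extra relation between $V$ and $E$ which, combined with $V=y+2xE+xV^2$, yields $(\star\star)$. This elimination is where essentially all the content sits and is also the computationally heaviest part. A direct attack on $(\star)$ by reindexing the convolutions is possible in principle, but the shift $n-1\to n-2$ forces the appearance of the mixed sums $\sum(if_2+jf_1)Q(i,f_1)Q(j,f_2)$ and $\sum f_1f_2\,Q(i,f_1)Q(j,f_2)$, whose evaluation leads back to exactly the same generating-function identities.
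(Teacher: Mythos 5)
Your Steps 1 and 2 are correct: adding \eqref{cut-slide} and \eqref{remy} does give $(n+1)Q(n,f)$ on the left, the translation into the series $\Phi$, $V=2\Phi+x\Phi_x-y\Phi_y$, $E=\Phi+2x\Phi_x$ is accurate (including the isolated $y$ from the $n=0$ terms), and \eqref{planar} is indeed equivalent, modulo those two proven identities, to the algebraic relation $V(V+E)=2yE+3xE^2$. You have also correctly diagnosed the real issue, which the paper itself flags: \eqref{planar} is \emph{not} a formal consequence of \eqref{cut-slide} and \eqref{remy}, so some additional input is required. But that is exactly where your proposal stops being a proof. Step 3 is a plan, not an argument: you \emph{expect} that imposing $\partial_y\Phi_x=\partial_x\Phi_y$ will produce the missing relation, but you never carry this out, and there is good reason to doubt it closes up as stated. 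From \eqref{cut-slide} and the definitions one gets $\Phi=2V(1+xE)-E$, $x\Phi_x=E-V(1+xE)$, $y\Phi_y=2V+3xVE-E$, so the mixed-partial condition becomes a PDE involving $V_x,V_y,E_x,E_y$; you give no mechanism for eliminating these derivatives down to an algebraic identity in $V,E$, and nothing in your scheme supplies the extra structural fact that makes the identity true rather than merely consistent. Since your target $(\star\star)$ is equivalent to the theorem itself, leaving it unproved is leaving the whole theorem unproved.

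The paper fills this gap with a different, and crucially combinatorial, ingredient: planar duality. Applying \eqref{cut-slide} to the dual map (using $Q(n,f)=Q(n,2+n-f)$) yields the third identity \eqref{dual}, $(v-1)Q(n,f)=\sum_{i+j=n-1}\sum_{f_1+f_2=f+1}(2i+1)Q(i,f_1)f_2Q(j,f_2)$, which cannot be obtained by formal manipulation of \eqref{cut-slide} and \eqref{remy} alone. The proof then proceeds by induction on $n$: after the same kind of recombination you performed, the problem reduces to evaluating $S=\sum_{i+j=n-1}\sum_{f_1+f_2=f}(i+1)Q(i,f_1)v_2Q(j,f_2)$, and the key move is to substitute the induction hypothesis (formula \eqref{planar} at the smaller size $i$) for the factor $(i+1)Q(i,f_1)$ \emph{inside} the convolution, then simplify the resulting triple sums using \eqref{cut-slide} and \eqref{dual}. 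If you want to rescue your generating-function formulation, the analogous repair would be to encode the duality symmetry of $\Phi$ as a functional relation and use it in place of your compatibility condition; without something playing that role, your Step 3 has no engine.
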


In Section \ref{sec:bij}, we will give the definition of the exploration and the discoveries, justify \eqref{cut-slide} and \eqref{precubic} and Corollary \ref{cor}, then we will describe the bijections behind Theorems \ref{cs} and \ref{re}. The proof that these are indeed bijections can be found in Section \ref{sec:proof}.
Formula \eqref{planar} is not straightforwardly derived from \eqref{cut-slide} and \eqref{remy}. The calculations to recover \eqref{planar} from there are displayed in Section \ref{sec:calcul}.
\section{The bijections}
\label{sec:bij}
In this section, we will define the exploration of a planar map and the notion of discoveries that result from it, then we will explain our bijections.

\subsection{The exploration}
\label{explo}
\begin{definition}
The \textbf{exploration} of a planar map (see Figure \ref{exploration}) is defined iteratively in the following way: starting from the root, go along the edges, keeping the edges on the right (progress in clockwise order). When an edge that is at the interface of the current face and a face not yet discovered is found, open this edge into a bud (an outgoing half-edge) and a stem (an ingoing half-edge). The rule is that the bud has to appear before the stem in the exploration (see Figure \ref{exploration}). Continue the process, thus entering the new face. Continue until the root is reached again.

Each edge that has been opened during the process is called a \textbf{discovery}, and the vertex attached to the bud is called a \textbf{discovery vertex}. If there are $f$ faces , there are $f-1$ discoveries (note that several discoveries can share the same discovery vertex).

The exploration is actually equivalent to a DFS of the dual, with a "right first" priority. Thus for each face but the outer face we can define its \textbf{previous face} as the face that is its parent in the spanning tree of the dual found by the exploration. The notion of \textbf{previous discovery} can be similarly defined. This also defines an order on the corners (resp. half-edges) incident to each vertex, according to the order in which they were visited during the exploration (see Figure \ref{exploration}). 

Let $e$ be a discovery, incident to faces $f_1$ and $f_2$, such that $f_1$ is the previous face of $f_2$. We say $e$ \textbf{leaves} $f_1$ and \textbf{enters} $f_2$.
\end{definition}

\begin{rem}
The exploration is a dynamic process that modifies the map along the way, but in the end, once the exploration is over and the discoveries have been found, we will deal with the original, unmodified map, with its original edges and faces. It is as if we did the exploration then closed the map back. Alternatively, one can think of an exploration that does not open the discoveries but just crosses them.
\end{rem}
\begin{figure}[!h]
\captionsetup{width=0.8\textwidth}

\centering

\includegraphics[scale=0.7]{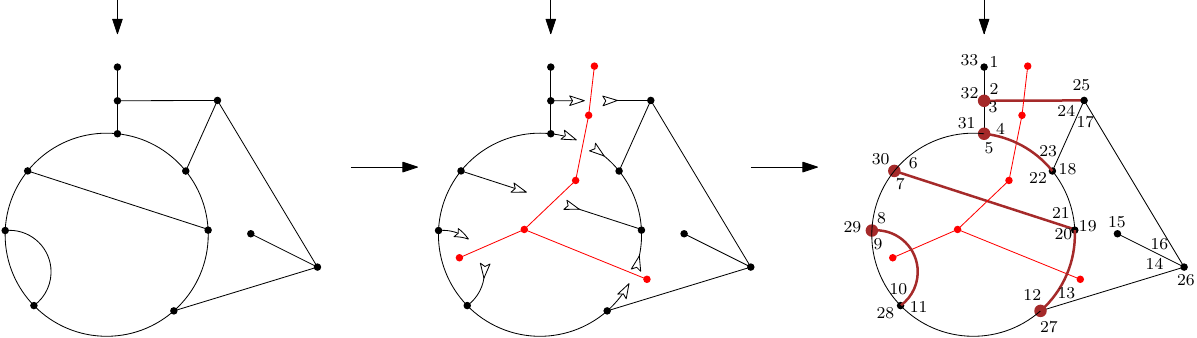}
\caption{The exploration of a planar map. The buds are the outgoing arrows, the stems are the ingoing arrows. Left: the original map. Center: the opened map. Right: The original map, with its discoveries and discovery vertices in fat brown. The red tree describes the partial order among the faces. The corners are labeled in the order they were found during the discovery}
\label{exploration}
\end{figure}

\begin{lem}
\label{clockwise}
Around each vertex, the order of the corners as defined by the exploration agrees with the clockwise order.
\end{lem}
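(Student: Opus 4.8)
The plan is to reduce the statement to the classical fact that the contour walk of a plane tree visits the corners around each vertex in clockwise order. The crucial observation is that the exploration is literally the boundary walk of the \emph{single} face obtained after opening all discoveries. Indeed, opening a discovery cuts one edge into a bud and a stem and merges the two faces it separated; since the $f-1$ discoveries are exactly the edges of the DFS spanning tree of the dual, opening all of them merges the $f$ faces into one. I would first make this precise by noting that opening a discovery does not change the rotation system at any vertex: the two half-edges of the cut edge remain in place, now as a bud and a stem, so the cyclic clockwise order of half-edges around each vertex, and hence the set of corners, is identical in the original map and in the opened map. Consequently it suffices to prove the statement for the opened map.

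Next I would identify the opened map. Its full (uncut) edges are the $n-(f-1)$ primal edges that are not discoveries. By the standard tree/cotree duality for planar maps, the complement of a spanning tree of the dual is a spanning tree of the primal; hence these edges form a spanning tree of the underlying graph. Together with Euler's relation $v-n+f=2$ this gives $n-(f-1)=v-1$ full edges on $v$ vertices, confirming that the uncut edges form a plane tree, to which the buds and stems are attached as dangling half-edges. In particular the opened map has a single face, and the exploration is exactly its contour, i.e.\ the walk following the boundary of this unique face keeping the edges on the right.

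It then remains to check that this contour walk visits the corners around a fixed vertex $v$ in clockwise order. Here I would use that in a tree every edge is traversed exactly twice by the contour, once in each direction: when the walk leaves $v$ through a half-edge it descends into the entire subtree (or runs out to the tip of a bud or stem) hanging from that half-edge and returns to $v$ through the very same half-edge, landing in the corner that is clockwise-next. Thus successive passages through $v$ visit clockwise-consecutive corners, and over the whole walk the $\deg(v)$ corners at $v$ are each swept out exactly once, in clockwise order.

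The main obstacle is the second step: one must be sure that opening all discoveries yields a genuine tree rather than a structure containing cycles, because the conclusion genuinely fails for single-face maps of positive genus, where the boundary walk does weave between non-adjacent corners. This is precisely where planarity enters, through the tree/cotree duality, so isolating and justifying that input — and checking that the dangling buds and stems do not interfere with the ``descend-and-return'' behaviour of the contour — is the heart of the argument, while the remaining bookkeeping is routine.
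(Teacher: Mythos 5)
Your proof is correct and follows essentially the same route as the paper's: both reduce the statement to the blossoming tree obtained by opening all the discoveries, and then invoke the fact that the contour of a plane tree visits the corners around each vertex in clockwise order. You simply make explicit the two facts the paper leaves implicit (that the non-discovery edges form a spanning tree, via tree/cotree duality, and the descend-and-return behaviour of a tree contour), which the paper treats as clear.
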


\begin{proof}
The order between the corners of the map is exactly the same as the order of the corners of the blossoming tree (a blossoming tree is a tree with some buds and stems attached) obtained by opening the discoveries. The exploration of a tree is just a tour of the unique face, and it is clear that the corners of each vertex are in clockwise order, and we are done.
\end{proof}

We can now relate the bijections and the formulas. In a map with $f$ faces, there are $f-1$ discoveries, so there are $(f-1)Q(n,f)$ maps with $n$ edges, $f$ faces and a marked discovery. A marked leaf can be retracted into a marked corner (see Figure \ref{feuille} left), such that there are $(2j+1)Q(j,f_2)$ maps with $j+1$ edges, $f_2$ faces and a marked leaf. There are $v_1Q(i,f_2)$ maps with $i$ edges, $f_1$ faces and a marked vertex. This explains why, in Theorems \ref{cs} and \ref{re}, Formulas \eqref{cut-slide} and \eqref{remy} are indeed consequences of the bijections. In a precubic map, one can retract a leaf into a marked side-edge losing two edges: after retracting the leaf into a corner,  merge the two edges adjacent to it (see Figure \ref{feuille} right). The same operation can be performed on the root of a precubic map, thus a precubic map with $3n+2$ edges and no leaves is equivalent to a cubic map with $3n$ edges. This explains how to derive Corollary \ref{cor} from \eqref{precubic}.

\begin{figure}[!h]
\captionsetup{width=0.8\textwidth}
\centering
\includegraphics[scale=0.5]{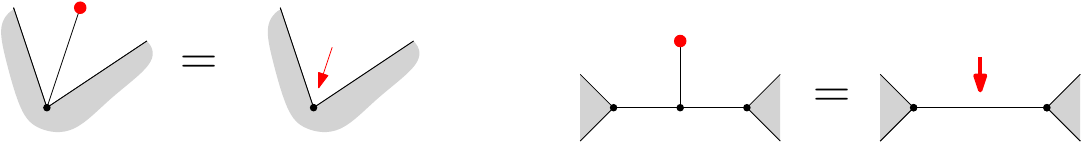}
\caption{Retracting a leaf: in a general map (left), in a precubic map (right, the vertex adjacent to the leaf is also removed)}
\label{feuille} 
\end{figure}
\subsection{Cut-and-slide bijection}

We will describe the bijection of Theorem \ref{cs} between maps $M$ with a marked discovery  and pairs of planar maps $(M_1,M_2)$ such that $M_1$ has a marked vertex and $M_2$ has a marked leaf. It will then be straightforward to see that restricting the bijection to precubic maps gives \eqref{precubic}.

We first need to introduce the notion of disconnecting discovery, and the splitting operation.
\begin{definition}
A discovery is said to be \textbf{disconnecting} if the corner preceding the discovery and the last corner (in the order defined by the exploration) around the discovery vertex lie in the same face. 
 
\label{disco}
\end{definition}
\begin{spli}
Any map with a marked disconnecting discovery can be split into two maps, one with a marked vertex, the other with a marked leaf in the outer face (see Figure \ref{split}): split the discovery vertex in two between $c$ and $c^*$. This divides the map in two, one containing the original root and a marked vertex, the other one rooted on the "splitting corner". Detach the discovery from the root to obtain a marked leaf in the outer face.
This operation is bijective: to go back, reattach the marked leaf to the root of the second map, and then glue its root to the last corner around the marked vertex in the first map.
\end{spli}
\begin{figure}[!h]
\captionsetup{width=0.8\textwidth}
\centering
\includegraphics[scale=1.5]{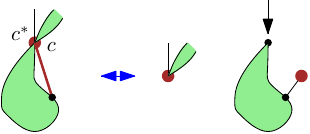}
\caption{Splitting a map at a disconnecting discovery. Here we only see what happens locally around the disconnecting discovery. On the left, the discovery and its discovery vertex are in fat brown, $c$ is the corner preceding the discovery, and $c^*$ is the last corner around the discovery vertex}
\label{split}
\end{figure}

The splitting operation of a disconnecting discovery describes our bijection in the case where the marked discovery is disconnecting, and the reverse bijection in the case where the marked leaf lies in the outer face. 

In general, discoveries are not disconnecting. In order to still split the map in two, given a discovery $e$, we will need to find a disconnecting discovery $e'$ that is "canonically" related to $e$. Conversely, the marked leaf in $M_2$ is not always in the outer face, we will need to "propagate" the leaf all the way up to the outer face. The notion of previous discovery will help us with that. This will be the general construction of the bijection that is detailed below.  First we need to ensure that there will always be a disconnecting discovery. That is the purpose of Lemma \ref{lemma}.

\begin{lem}
If a discovery leaves the outer face (as defined in Definition \ref{explo}), then it is disconnecting.
\label{lemma}
\end{lem}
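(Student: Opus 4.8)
The plan is to show that both corners named in the definition of ``disconnecting'' lie in the outer face. Write $e$ for the marked discovery, $u$ for its discovery vertex, and recall that ``$e$ leaves the outer face'' means that the previous face of the face $e$ enters is the outer face. Consequently, at the moment the exploration opens $e$ it is touring the outer face, so the corner of $u$ immediately preceding $e$ (in the clockwise order around $u$, which by \cref{clockwise} is the exploration order) lies in the outer face; in particular $u$ is incident to the outer face. It then remains to prove that the \emph{last} corner around $u$ also lies in the outer face, for then the two corners share the outer face and $e$ is disconnecting by definition.

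For the last corner I would use \cref{clockwise}: the corners around $u$ are visited in a single clockwise sweep, the exploration entering $u$ through one half-edge $h$ and, after the whole sweep, leaving $u$ through that same $h$ for good. Here $h$ is a genuine edge of the map, namely the edge joining $u$ to its parent in the blossoming tree obtained by opening every discovery, along which $u$ is first reached and last left. Hence the last corner $c^{*}$ of $u$ is exactly the corner clockwise-preceding $h$, i.e. the corner lying on the side of $h$ opposite to the first corner; equivalently, $c^{*}$ is the corner visited just before the exploration leaves $u$ along $h$ for the last time, while touring the face on that side of $h$.

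The heart of the matter is to identify that face as the outer face. I would argue that the exploration is a depth-first search of the dual rooted at the outer face, so the outer face is an ancestor in the dual tree of every other face incident to $u$, and its tour stays active while all the dips into those descendant faces are performed and completed. Thus, among the faces incident to $u$, the outer face is the one whose boundary tour passes $u$ last: every visit of $u$ coming from a bounded incident face occurs during a dip nested inside the outer tour, hence strictly before the outer tour finally moves past $u$. Therefore the last corner of $u$ is an outer corner, so $c^{*}$ lies in the outer face and $e$ is disconnecting.

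The step I expect to be the main obstacle is precisely this last one, because $u$ may well be first reached deep inside the map (its first corner can lie in a bounded face, since the exploration may enter a descendant face incident to $u$ before the outer tour reaches $u$). Making ``the outer tour passes $u$ last'' rigorous requires careful bookkeeping of the clockwise sweep against the DFS nesting --- tracking, between two consecutive corners of $u$, whether the intervening half-edge is a bud (a dip one level deeper), a stem (a return one level shallower), or a tree edge (an excursion returning on the ancestor side) --- and checking that this profile necessarily terminates at the shallowest, hence outer, incident face. I would isolate this as a standalone claim, namely that for any vertex incident to the outer face its last corner is an outer corner, and prove it once, since it is exactly what the lemma needs.
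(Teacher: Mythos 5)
Your overall plan coincides with the paper's: you reduce \cref{lemma} to exactly the standalone claim the paper proves (for any vertex incident to the outer face, its last corner is an outer corner), and your treatment of the easy half --- the corner preceding the discovery lies in the outer face because that is the face the discovery leaves --- is correct. The problem is the proof of the key claim, where you have a genuine gap that you yourself flag but do not fill. Your argument is: every bounded face incident to $u$ is a descendant of the outer face in the dual DFS tree, its tour is a dip nested inside the outer tour, ``hence'' that dip occurs strictly before the outer tour finally moves past $u$. That ``hence'' does not follow. Nesting only says each dip is a contiguous interval of the exploration; it says nothing about where along the outer tour the dip is anchored. What must be ruled out is precisely a dip into a bounded face incident to $u$ that is opened from a point of the outer tour located \emph{after} the last outer corner of $u$, and nothing in your proposed bookkeeping of buds, stems and tree edges excludes this: your claim that ``the outer tour passes $u$ last'' is the statement to be proven, restated.

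The paper closes this gap with a short contradiction argument that your proposal is missing, and whose one extra ingredient is the adjacency you already noticed but did not exploit: the edge $e$ traversed just before the first corner $c_0$ of $u$ (your $h$) is, by \cref{clockwise}, also adjacent to the last corner $c^*$ of $u$, hence lies on the boundary of the face $F^*$ containing $c^*$. Suppose $F^*$ is not the outer face. Since $u$ has an outer corner $c$ seen before $c^*$, the nesting property forces the entire tour of $F^*$ to take place after $c$ is seen; in particular $F^*$ is still undiscovered when $c$ is seen, hence also when $e$ is first traversed, which happens even earlier (before $c_0$). But then $e$ is an edge at the interface of the current face and the undiscovered face $F^*$, so the exploration must open it as a discovery and enter $F^*$, seeing $c^*$ before $c_0$ --- contradicting that $c_0$ is the first corner of $u$. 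With this observation the ``careful bookkeeping'' you anticipate as the main obstacle becomes unnecessary; without it, or something equivalent, your proof does not go through.
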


\begin{proof}
We will show the following stronger result, which directly implies Lemma \ref{lemma}: if a vertex has a corner in the outer face, then its last corner lies in the outer face. Indeed, if a discovery leaves the outer face, then the corner preceding the discovery must be on the outer face. In particular, if $v$ denotes the discovery vertex, then $v$ has a corner on the outer face, so by this result the last corner of $v$ must be on the outer face. Thus, $v$ is a disconnecting discovery.

In a map $M$, let $v$ be a vertex with one of its corners lying in the outer face. If $v$ is the root vertex, then it is obvious that its last corner lies in the outer face. Else, let $e$ be the first edge around $v$, as defined by the exploration (i.e. the edge we see in the exploration just before we see $v$ for the first time). Let $c$ be a corner around $v$ that lies in the outer face. Suppose it is not the last corner around $v$ (if it is, we have nothing to prove). Let $c_0$ (resp. $c^*$) be the first (resp. the last) corner around $v$, and let $F_0$ (resp. $F^*$) be the face in which $c_0$ (resp. $c^*$) lies (see Figure \ref{corners}).

\begin{figure}[!h]
\captionsetup{width=0.8\textwidth}
\centering
\includegraphics[scale=1]{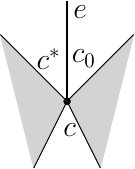}
\caption{}
\label{corners}
\end{figure}

If a face $F'$ is deeper (in terms of the partial order defined by the exploration) than a face $F$ (and $F\neq F'$), then during the exploration (ignoring all other faces), we first see a part of $F$, then all of $F'$, then the rest of $F$. This implies that, if $F^*$ is not the root face, during the exploration of $M$, at the time we see $c$, $F^*$ has not been discovered yet. But that would mean that at the time we see $e$ (which comes before $c$ in the exploration), $F^*$ has not been discovered yet. But then $e$ would be a discovery, and thus $c^*$ would be seen before $c_0$, that is a contradiction. The lemma is proved.
\end{proof}

\begin{bij}
\label{CS1}
The general process is iterative (see Figure \ref{bij} for an example).

\begin{figure}[!h]
\captionsetup{width=0.8\textwidth}
\centering
\includegraphics[scale=0.7]{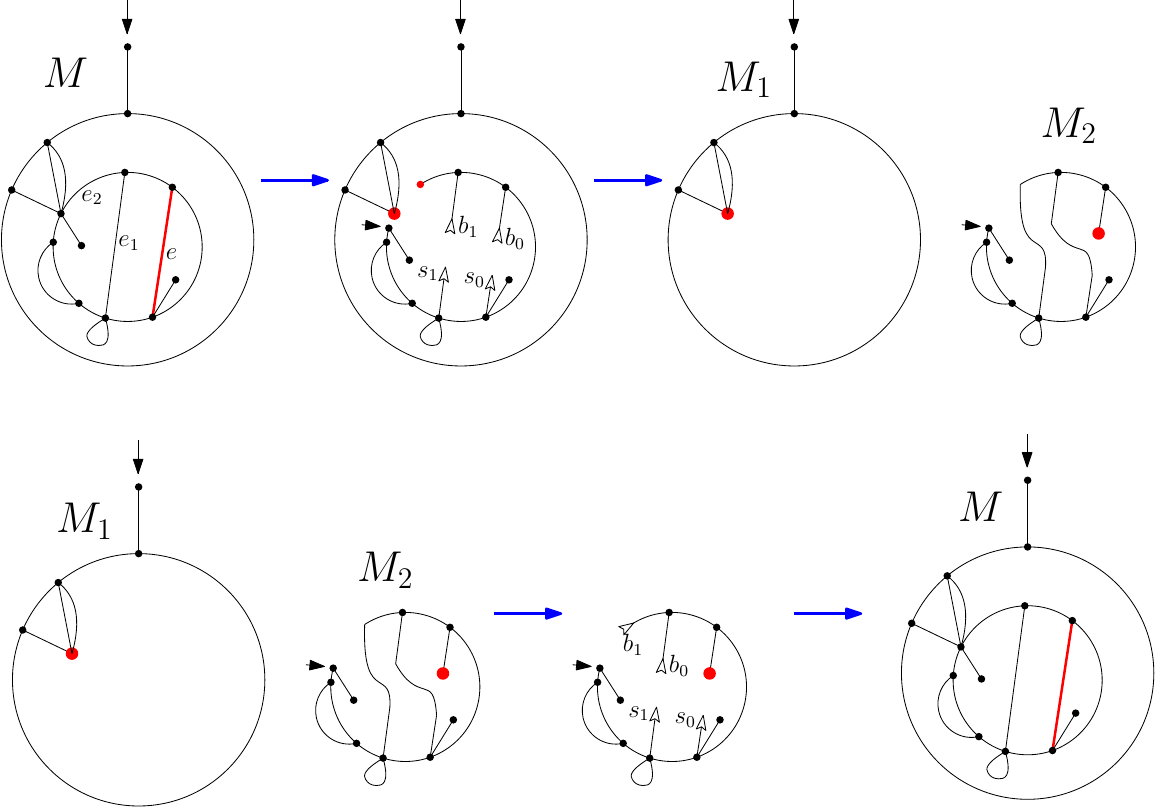}
\caption{The bijection (above) and its inverse (below)}
\label{bij}
\end{figure}

\textbf{Cut process:} Start from a map $M$ with a marked discovery $e$, let $v$ be its discovery vertex. If the discovery is disconnecting, then split $M$ at $v$ as described in the splitting operation.

Otherwise, open $e$ into a bud $b_0$ and a stem $s_0$, and consider its previous discovery $e_1$ (in the order defined above). If it is disconnecting, then split it, otherwise open it (into $b_1$ and $s_1$) and consider the previous discovery $e_2$, and so on until a splitting operation is made. Note that a discovery that leaves the outer face is always disconnecting (because of Lemma \ref{lemma}), so the algorithm terminates. One ends up with two maps $M_1$ and $M_2'$, such that $M_1$ has a marked vertex and $M_2'$ has a marked leaf $l$ and (possibly) some buds and stems, all lying in the outer face.

\textbf{Slide process:} We will not modify $M_1$. If there are no buds and stems in $M_2'$, we are done. Else, consider $s_0$, and make it a marked leaf. Then consider $l$, and make it a stem. Finally, glue back the buds and stems together canonically: starting from the root of $M_2'$, taking a clockwise tour of the outer face, one encounters a certain number of buds, then the same number of stems. There is only one way to match each bud with each stem such that the map remains planar. Equivalently, if there are $k+1$ buds and $k+1$ stems, match $b_0$ with $s_1$, and so on, until $b_k$ is matched with $l$. We obtain a map $M_2$ with a marked leaf, together with the map $M_1$ with a marked vertex.
\end{bij}
We can now describe the inverse bijection:
\begin{bij}
\label{CS2}
Starting from $M_2$ with a marked leaf $l$ and $M_1$ with a marked vertex, consider $M_2$. $l$ lies in a certain face $F$, and if $F$ is not the outer face, there is a certain discovery $e_0$ that enters $F$. Open it into a bud $b_0$ and a stem $s_0$, then open the previous discovery $e_1$, and repeat the process until a discovery that leaves the outer face has been opened (in that case there is no previous discovery to open). One ends up with a map $M_2'$ with a marked leaf $l$ and possibly some buds and stems, all lying in the outer face. If there are some buds and stems, let $s$ be the stem that was created last in the process. Make $s$ a marked leaf $l^*$, and make $l$ a marked stem $s^*$, then close the map canonically. One now has a map $M_2^*$ with a marked leaf on the outer face and (possibly) a marked edge $e$ (that comes from the closure of $s^*$). This marked edge is actually a discovery in $M_2^*$ (and will be a discovery in the final map). If $e$ does not exist, let $l^*=l$, and mark the edge adjacent to $l^*$ (and call it $e$). Now do the inverse of the splitting operation: glue $l^*$ to the root vertex of $M_2^*$ at its first corner, and then glue the root of the resulting map at the last corner of the marked vertex of $M_1$ to obtain a map $M$ with a marked discovery $e$.
\end{bij}
\begin{rem}\label{rem_precubic}
This operation restricts to precubic maps, and in this case discovery vertices are always of degree $3$, so that when split, the marked vertex of $M_1$ and the root of $M_2$ are both of degree $1$. This justifies \eqref{precubic}.
\end{rem}

Those two operations are inverse of each other because of the following property (that will be proved in Section \ref{sec:proof}):
\begin{lem}\label{lem_inverse}
In the bijection and its inverse, the closure of the buds and stems are discoveries in the resulting map. Moreover, in the bijection, if $e_1$,…$e_k$ are the discoveries created by closing buds and stems, $e_k$ leaves the outer face, and for all $i<k$, $e_{i+1}$ is the previous discovery of $e_i$. In the inverse bijection, if $e_1$,…$e_{k+1}$ are the discoveries created by closing buds and stems, $e_{k+1}$ is disconnecting, and for all $i<k$, $e_{i+1}$ is the previous discovery of $e_i$.
\end{lem}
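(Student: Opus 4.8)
The plan is to reduce both assertions to a single statement about the canonical closure of \cref{CS1}, and then to read off the chain structure from the nested (laminar) shape of the matching. Write the closure edges as $\epsilon_1,\dots,\epsilon_N$, ordered from the outermost to the innermost chord in the clockwise tour of the outer face of $M_2'$; because the matching $b_0\leftrightarrow s_1,\dots,b_k\leftrightarrow l$ is the \emph{unique} one keeping the map planar, these chords are pairwise nested, and they subdivide the (single) outer face of $M_2'$ into nested regions $R_0\supset R_1\supset\dots\supset R_N$, where $R_0$ contains the root and $\epsilon_i$ is the only boundary separating $R_{i-1}$ from $R_i$. The core claim is that each $\epsilon_i$ is a discovery of $M_2$. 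To see this, note that everything lying inside the chord $\epsilon_i$ is enclosed by it, so the only way for the right-first exploration (the DFS of the dual) to reach any face inside $\epsilon_i$ is to first cross $\epsilon_i$ from $R_{i-1}$; hence when the exploration meets $\epsilon_i$ it sits at the interface of the current face $R_{i-1}$ and the not-yet-visited face $R_i$, which is exactly the definition of a discovery.

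Granting the core claim, the remaining assertions are immediate from the nesting. Since $\epsilon_i$ enters $R_i$ and the parent of $R_i$ in the DFS tree of the dual is $R_{i-1}$, the discovery entering $R_{i-1}$ is $\epsilon_{i-1}$, so $\epsilon_{i-1}$ is the previous discovery of $\epsilon_i$; relabelling the chords from innermost to outermost as $e_1,\dots,e_N$ (so that $e_i=\epsilon_{N+1-i}$) turns this into the stated relation that $e_{i+1}$ is the previous discovery of $e_i$ for every $i<N$. The outermost chord $\epsilon_1$ separates $R_0$, a piece of the original outer face, from $R_1$, so it leaves the outer face; in the labelling this is $e_N$, giving the terminal condition in the forward direction. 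For the inverse bijection the extra glued edge is produced when the marked leaf $l$ is promoted to the stem $s^*$ and closed, and the resulting discovery borders the outer face; by \cref{lemma} a discovery that leaves the outer face is disconnecting, which is precisely the property $e_{k+1}$ is required to have so that the splitting operation of \cref{disco} can be applied.

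The step I expect to be the main obstacle is justifying that the two endpoint modifications of the slide process — demoting $s_0$ to a marked leaf and promoting the marked leaf $l$ to a stem — leave intact the ``all buds, then the same number of stems'' pattern of the clockwise tour, so that the canonical (nested) matching, and with it the whole laminar picture above, is still the one actually performed. Here I would invoke \cref{clockwise}: reading the corners around the discovery vertex and along the outer face in clockwise order pins down the cyclic positions of $s_0$, of $l$, and of each remaining $b_j$ and $s_j$, and shows that exchanging the roles of $s_0$ and $l$ keeps the two blocks balanced and correctly ordered. Once this ordering is under control the nesting is forced, the core claim and the chain structure follow as above, and running the identical analysis on the opening process of \cref{CS2} yields the inverse statement; comparing the two chains then shows that \cref{CS1,CS2} are mutually inverse.
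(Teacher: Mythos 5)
Your nested-chords skeleton (the closures form a chain of nested chords cutting the old outer face of $M_2'$ into a linear sequence of regions, from which the previous-discovery chain and the terminal conditions can be read off) is a reasonable frame, but the justification of your core claim is unsound, and that claim is where all the content of the lemma sits. Being a discovery is a property of the exploration \emph{order}, not of dual connectivity: the closure edge must be the \emph{first} edge bordering $G_i$ that the exploration meets, and this has to be compared against the \emph{old} edges of $M_2'$ bounding the same region, which your Jordan-curve reasoning ignores. A region inside a chord is bounded not only by chords but also by old edges, across which the dual DFS can pass: the interior faces of $M_2'$ lie on neither side of any chord and are adjacent to regions both inside and outside it, and an old cut-edge of $M_2'$ with both sides on the former outer face can even join $R_{i-1}$ to $R_i$ directly. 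Concretely, the statement ``every canonical-closure edge of a blossoming planar map is a discovery'' is false: take $M_2'$ to be a single edge from the root $v$ to $u$, with a bud at $u$ and a stem at $v$ placed so that the tour reads bud before stem. The closure chord $\epsilon$ and the edge $vu$ bound a bigon, both lie at the interface of the two faces of $M_2$, and the exploration from the root meets $vu$ first, so $vu$ --- not $\epsilon$ --- is the discovery.

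Consequently, any correct proof must use the provenance of $M_2'$ from the cut process, namely that the opened edges $d_0,\dots,d_k$ were discoveries of the original map $M$ forming a previous-discovery chain in which only $d_k$ is disconnecting; your argument never invokes this hypothesis, and indeed it would ``prove'' the claim in the counterexample above. This is exactly what the paper's proof in \cref{sec:proof} supplies: encoding faces as words of side-edges, it supposes some side-edge $f$ of $G_{i+1}$ occurring before $e_i$ has its opposite $\overline{f}$ in $G_i$, and derives a contradiction in each of the two possible cases --- if $\overline{f}$ lies in $A_i$, then $f$ would have been an earlier entrance into $F_i$ in $M$, contradicting that $d_i$ is the discovery of $F_i$; if $\overline{f}$ lies in $B_{i+1}$, then $f$ is a cut-edge, which (via \cref{lemma} applied to the component it cuts off) forces $d_i$ to be disconnecting, contradicting the minimality of $k$. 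Note also that the issue you flagged as the main obstacle (that exchanging the roles of $s_0$ and $l$ preserves the balanced bud/stem pattern) is comparatively minor; the cut-edge obstruction above is the real difficulty, and it cannot be resolved by \cref{clockwise} or by planarity of the closure alone.
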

\begin{rem}
\label{rem}
A similar but different cut-and-slide operation also appeared in \cite{Bettinelli} (and in some sense also in \cite{slice}). However there are significant differences: in \cite{Bettinelli} the cut path is geodesic (leftmost BFS), and both endpoints of the path need to be specified. Whereas here, the cut path is defined by a leftmost DFS, and only one endpoint of the path (the marked discovery) needs to be specified, the other endpoint (the disconnecting discovery) is uniquely determined. Furthermore, the bijections in \cite{Bettinelli} imply linear formulas, contrarily to our quadratic formulas.
\end{rem}
\subsection{Generalized R\'emy bijection}
\label{sec:Remy}
We will now describe a generalized R\'emy bijection on planar maps, that also relies on the cut-and-slide operation. 

We recall R\'emy's bijection for plane trees that proves the formula $(n+1)Cat(n)=2(2n-1)Cat(n-1)$ where $Cat(n)$ counts the number of rooted plane trees with $n$ edges (see Figure \ref{tree} for an example). Start from a tree $T$ with $n$ edges and a marked vertex $v$. If $v$ is a leaf, retract it (as in Figure \ref{feuille}) to obtain a tree $T'$ with $n-1$ edges, with a marked corner. Otherwise, $v$ has a last child $v'$, that is its child that is found last during a clockwise tour of the unique face. We can then contract the edge between $v$ and $v'$, and mark a corner around the merging vertex to remember where to grow the edge back (as in Figure \ref{tree}). The reverse bijection is then straightforward.

\begin{rem}
Leaves are defined as vertices of degree $1$ \textbf{that are not the root vertex}. Thus, if the root vertex has degree $1$, one should apply the second operation to it.
\end{rem}
\begin{figure}[!h]
\captionsetup{width=0.8\textwidth}

\centering
\includegraphics[scale=0.5]{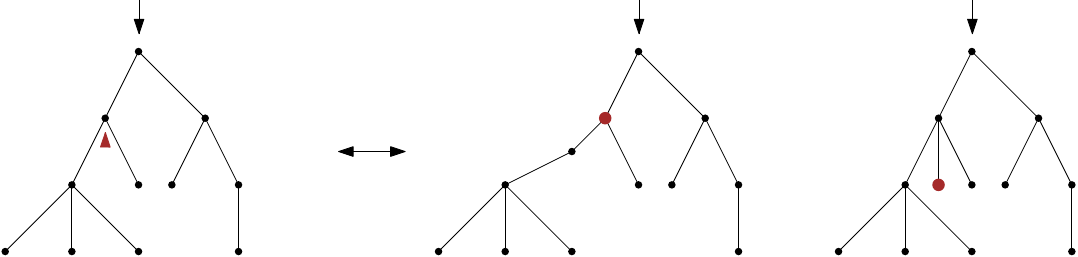}
\caption{R\'emy's bijection for plane trees. The left tree can be obtained by applying the bijection to each of the two trees on the right. The rightmost tree corresponds to the case where $v$ is a leaf, and the middle tree corresponds to the case where $v$ is not a leaf.}
\label{tree}
\end{figure}

In a general planar map, let us define precisely the operations of edge contraction and edge growing:

Let $v$ be a non-leaf vertex. It has a last corner $c_2$ (as defined by the exploration). Let $e$ be the edge that comes just before $c_2$ in clockwise order around $v$. We call $e$ the last edge around $v$. Let $v'$ be the other end of $e$ (note that it is possible that $v=v'$). It is called the last child of $v$. We will define the contraction operation only in the case where $v$ is seen strictly before $v'$ in the exploration. The edge $e$ is adjacent to two corners of $v$: $c_2$ and another corner $c_1$. It is also adjacent to two corners of $v'$, $c'$ and $c''$. Since $v$ appears strictly before $v'$, $c'$ is the first corner around $v'$, and $c''$ is the last corner around $v'$.

The contraction operation consists in contracting $e$, merging $v$ and $v'$ into a vertex $v^*$, merging $c_1$ and $c'$ into $c^*$ and $c_2$ and $c''$ into $c$, such that $c$ is the last corner around $v^*$, and $c^*$ is the marked corner.

Conversely, starting with a marked corner $c^*$ adjacent to a vertex $v^*$ whose last corner is $c$, one can grow an edge $e$, splitting $v^*$ into $v$ and $v'$, $c^*$ into $c_1$ and $c'$, and $c$ into $c^*$ and $c_2$. $c_2$ is the last corner around $v$ and $c''$ the last corner around $v'$. The marked vertex is $v$, and $v'$ is its last child. This defines the growing operation. 
Lemma \ref{lem_growing} will ensure these two operations are inverse of each other.

\begin{figure}[!h]
\captionsetup{width=0.8\textwidth}
\centering
\includegraphics[scale=0.7]{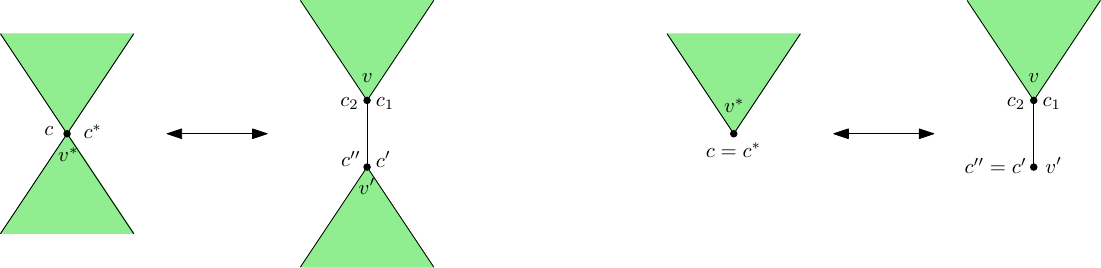}
\caption{The contraction and growing operations (right: the special case where the marked corner is the last corner)}
\label{growing}
\end{figure}

\begin{lem}\label{lem_growing}
Let $M$ be a map with a marked corner $c^*$, and let $M'$ be the map obtained after a growing operation is performed at $c^*$. Let $v$ be the marked vertex and $v'$ its last child. Then $v$ appears before $v'$ in the exploration.
\end{lem}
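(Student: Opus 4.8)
The plan is to pinpoint the exact moment at which the exploration of $M'$ first reaches $v'$, and to show that at that instant the exploration has just arrived from $v$ along the newly grown edge $e$. Recall from the growing operation (\cref{growing}) that $e$ is incident to the corners $c_1,c_2$ of $v$, with $c_2$ the last corner of $v$, and to the corners $c',c''$ of $v'$, with $c''$ the last corner of $v'$, and that the inverse contraction identifies $c_1$ with $c'$ and $c_2$ with $c''$. The first thing I would do is pin down the local clockwise picture around $v'$.

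By \cref{clockwise} the exploration order of the corners around any vertex is their clockwise order, so the first corner of $v'$ is simply the first corner met clockwise, and $c''$ being the last corner means the first corner is the one immediately following $c''$ clockwise. I would then determine on which side of $e$ the corners $c'$ and $c''$ sit: since contracting $e$ identifies $c_1$ with $c'$, these two corners lie on the same side of $e$, i.e. in the same face of $M'$; combined with the fact that $c_1$ is the corner immediately clockwise-before $e$ at $v$, this forces $c'$ to be the corner immediately clockwise-after $e$ at $v'$ and $c''$ the one immediately clockwise-before. Hence, reading clockwise around $v'$, the cyclic pattern near $e$ is $\ldots,c'',e,c',\ldots$, and since $c''$ is the last corner of $v'$, the corner $c'$ --- being the one immediately after $c''$ clockwise --- is exactly the first corner of $v'$.

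With this in hand the conclusion is immediate. The exploration reaches any corner by entering it along the edge lying immediately clockwise-before it (the convention used, for instance, in the proof of \cref{lemma}), and for the first corner $c'$ of $v'$ that edge is $e$. Thus the exploration arrives at $v'$ for the very first time by traversing $e$, necessarily from its other endpoint $v$, departing from the corner $c_1$ of $v$ that lies immediately clockwise-before $e$. In particular $c_1$ is visited immediately before $c'$. Since $c_1$ is a corner of $v$, the first corner of $v$ is visited no later than $c_1$, hence strictly before $c'$; therefore $v$ appears before $v'$.

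The one genuinely delicate point --- the main obstacle --- is fixing the orientation around $v'$, namely proving that it is $c'$ (rather than $c''$) that is reached along $e$ and that $c'$ is the first corner of $v'$. Both facts hinge on correctly transporting the corner identifications of the contraction into a clockwise order via \cref{clockwise}, together with the convention that the exploration enters a corner through its clockwise-preceding edge; once $c'$ and $c''$ are correctly placed relative to $e$, the rest is bookkeeping. I would sanity-check the orientation against the tree case, where the growing operation is exactly R\'emy's, $v'$ is literally the last child of $v$, and the statement reduces to \emph{parent before child} in the contour tour, and I would verify that the degenerate configurations ($v$ of degree $2$, or $e$ a bridge so that both sides of $e$ belong to a single face) do not disturb the argument, since it only uses that $c_1$ and $c'$ are consecutive along $e$ in the tour.
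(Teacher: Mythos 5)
Your argument hinges on the assertion that ``the exploration reaches any corner by entering it along the edge lying immediately clockwise-before it,'' applied to the corner $c'$ whose clockwise-preceding edge is $e$. This is not a valid general principle for the exploration defined in the paper, and the case where it fails is exactly the case your proof must exclude. The exploration is not a plain face tour: when it meets an edge separating the current face from a not-yet-discovered face, it \emph{opens} that edge into a bud and a stem and does not traverse it; a corner whose clockwise-preceding edge has been opened is reached by sliding around the half-edge from the previous corner \emph{of the same vertex}, not by arriving from the other endpoint. So your step ``thus the exploration arrives at $v'$ for the very first time by traversing $e$, necessarily from $v$'' silently assumes that $e$ is not a discovery of $M'$. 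That is dangerously close to assuming the conclusion: by the very next lemma of the paper, if $v'$ were found before $v$ then $e$ \emph{would} be a discovery, and in that unexcluded scenario (the tour, while visiting the face containing $c_2$ and $c''$, reaches $c''$ before any corner of $v$, with the face of $c_1,c'$ still undiscovered) the edge $e$ is opened with its bud at $v'$, the tour visits $c''$ then $c'$, and $v'$ precedes $v$. Nothing in your proof rules this scenario out.

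The gap is patchable but not for free: from your step (b), if $e$ were opened then $c'$ would be visited immediately after $c''$, contradicting ``$c''$ is the last corner of $v'$''; however, that last-corner property, which you import from the description of the growing operation, is itself an unproved assertion about the exploration of $M'$, on exactly the same footing as the statement being proved, and the paper's own proof deliberately avoids relying on it. The paper argues instead by locality: since growing is a local operation, the explorations of $M$ and $M'$ coincide until they first reach the modified region; the first corner $p$ of $v^*$ in $M$ is the corner immediately clockwise after the last corner $c$ of $v^*$, and the corners lying clockwise between $c$ and $c^*$ are precisely those assigned to $v$ by the split, so $p$ becomes a corner of $v$; hence the exploration of $M'$ meets $v$ before any corner of $v'$, regardless of whether $e$ turns out to be a discovery. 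If you want to keep your local argument at $v'$, you must first prove that $e$ is not opened during the exploration of $M'$ (and also handle the possibility of the root corner), rather than invoking the entering convention as universal.
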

\begin{proof}
The growing operation is local, so both explorations of $M$ and $M'$ look exactly the same before $c^*$ (resp. $c_1$) is found. Let $p$ be the first corner around $v^*$ in $M$. After the growing operation it is adjacent to $v$, and it is reached before any other corner around $v$ or $v'$, thus $v$ appears before $v'$ in the exploration (the special case $p=c^*$ works the same).
\end{proof}
We also need to cover the remaining case, namely when the the marked vertex is seen after its last child in the exploration.
\begin{lem}
Let $v$ be a non-leaf vertex, $e$ its last edge, and $v'$ its last child. If $v'$ is found before $v$ in the exploration (including the case $v=v'$), then $e$ is a discovery.
\end{lem}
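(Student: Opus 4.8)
The plan is to pass to the blossoming tree obtained by opening every discovery of $M$, and to reduce the statement to a transparent fact about depth-first contour tours of trees. Write $\tilde M$ for the object obtained from $M$ by opening all $f-1$ discoveries into buds and stems. As recalled in the proof of \cref{clockwise}, opening a discovery does not alter the rotation system at either endpoint, so the corners of $M$ and of $\tilde M$ coincide, and the exploration of $M$ is exactly the contour tour of the unique face of the tree $\tilde M$; by \cref{clockwise} this tour visits the corners around every vertex in clockwise order. Since $\tilde M$ is a genuine tree, it has no loop, so the case $v=v'$ is immediate: a loop is never an edge of $\tilde M$, hence $e$ is necessarily a discovery and there is nothing to prove. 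From now on I assume $v\neq v'$ and argue by contraposition, showing that if $e$ is \emph{not} a discovery — i.e. if $e$ is a genuine edge of $\tilde M$ — then $v$ is found before $v'$, contradicting the hypothesis.

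The main step is to locate the last corner of a vertex relative to its entry edge in the contour tour. Root $\tilde M$ at the root vertex of $M$. For a non-root vertex $w$, the tour enters $w$ for the first time through its parent edge, landing in the corner lying clockwise immediately after that half-edge, then visits the remaining corners of $w$ in clockwise order — merely rounding each bud or stem, and descending into and returning from each child subtree — and finally leaves $w$ for the last time through the parent edge. Because corners and half-edges alternate around $w$ and all $\deg w$ corners are visited, the parent half-edge is the \emph{unique} half-edge lying clockwise between the last corner $c_2$ and the first corner of $w$; in particular the parent edge lies clockwise \emph{after} $c_2$. The root is treated the same way: the arrow splits its distinguished corner so that the tour starts clockwise after the arrow and ends clockwise before it, so the arrow plays exactly the role of the parent edge, sitting clockwise between the last and first corners of the root.

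Now recall that $e$ is, by definition, the half-edge lying clockwise immediately \emph{before} the last corner $c_2$ of $v$. By the previous paragraph the parent edge (or the arrow, if $v$ is the root) lies clockwise \emph{after} $c_2$, so $e$ is not the parent edge: it is a bud, a stem, or a genuine child edge of $v$. If it is a bud or a stem, then $e$ is a discovery and we are done. If instead $e$ is a genuine edge of $\tilde M$, then $v'$ is a child of $v$, hence lies in the subtree hanging below $v$; since in a contour tour a vertex has its first corner visited strictly before any corner of its subtrees, $v$ is found before $v'$. This contradicts the assumption that $v'$ is found before $v$, and therefore $e$ must be a discovery.

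The only delicate point is the structural claim of the second paragraph — identifying the parent edge as the half-edge sandwiched clockwise between the last and first corners, and treating the root uniformly through the arrow. Everything hinges on recording that buds and stems are pendant half-edges that the tour merely rounds, so they never serve as the unique entry/exit edge of a vertex, together with the clockwise visiting order supplied by \cref{clockwise}; once this is in place, the conclusion that $v$ precedes $v'$ is the standard DFS ancestor property and requires no computation.
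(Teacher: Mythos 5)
Your proof is correct, and it takes a genuinely different route from the paper's. The paper argues locally around the edge $e$: assuming $e$ is not a discovery, the exploration traverses both sides of $e$ without opening it, so (in the notation of \cref{sec:Remy}) $c''$ immediately precedes $c_2$ and $c_1$ immediately precedes $c'$; the hypothesis that $v'$ is found before $v$ rules out $c'$ being the first corner of $v'$, so \cref{clockwise} forces $c'$ to come after $c''$, giving the order $c''$, $c_2$, $c_1$, $c'$, which contradicts $c_2$ being the last corner of $v$ (and the loop case is dispatched by noting that a loop separates a planar map, so the DFS of the dual must open it). You instead argue globally in the blossoming tree: rooting it at the root vertex, you characterize the parent half-edge (or the root arrow) as the unique half-edge sitting clockwise between a vertex's last and first corners, conclude that a last edge is never a parent edge, and hence that a non-discovery last edge must be a child edge, so the DFS ancestor property yields $v$ before $v'$ --- the contrapositive of the statement; your loop case (a tree has no loops) is the paper's separation argument in different clothes. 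What each approach buys: the paper's is shorter and purely local, needing only four-corner bookkeeping; yours isolates a reusable structural fact (around every vertex, the last edge is either a child edge or a bud/stem) which also makes the companion lemma about the growing operation transparent, at the price of setting up the rooted-tree formalism and pinning down the orientation convention --- which you do correctly and consistently with \cref{clockwise}, including the degree-one root and the treatment of the arrow as a parent edge.
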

\begin{proof}
If $v=v'$, then $e$ is a loop, thus a discovery (since a loop separates a planar map in two, it has to be opened during the exploration).

Otherwise, let $c_1,c_2$ (resp. $c',c''$) be the corners of $v$ (resp. $v'$) that are adjacent to $e$. Note that $c_2$ is the last corner of $v$, and assume $e$ is not a discovery. Thus, it is not opened during the exploration, such that $c''$ comes before $c_2$, and $c_1$ before $c'$ (see Figure \ref{growing_not}). $c'$ cannot be the first corner of $v'$, otherwise $v$ would be seen before $v'$ in the exploration. Thus, Lemma \ref{clockwise} implies that $c'$ comes after $c''$. But that would mean the order among the four corners in the exploration is $c''$, then $c_2$, then $c_1$, then $c'$, which is impossible since $c_2$ is the last corner of $v$. Thus, $e$ is a discovery.
\end{proof}
\begin{figure}[!h]
\captionsetup{width=0.8\textwidth}
\centering
\includegraphics[scale=0.7]{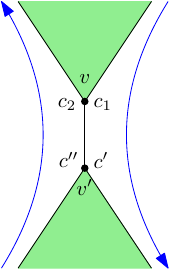}
\caption{The exploration locally around $e$, if it is not a discovery}
\label{growing_not}
\end{figure}
Now we are ready to explain the generalized R\'emy bijection:

\begin{bij}
\label{RM}
Take a planar map $M$ and mark a vertex $v$. If it is a leaf, contract it to mark a corner. Else, let $v'$ be its last child, and $e$ be its last edge. If $v'$ is seen after $v$ in the exploration, then contract $e$ and mark the corresponding corner. The inverse operation is the growing operation as described above. This gives the first term of the RHS of \eqref{remy}.
Otherwise, $e$ is a discovery, apply the cut-and-slide operation at $e$. One ends up with two maps $(M_1,M_2)$. $M_1$ has a marked vertex, and $M_2$ has a marked leaf $l$ that is the last child of its neighbor $w$. Contract $l$ to mark $w$. To go backwards, grow $l$ out of the last corner around $w$, then apply the inverse of the cut-and-slide operation. This gives the second term of the RHS of \eqref{remy}.

\begin{figure}[!h]
\captionsetup{width=0.8\textwidth}
\centering
\includegraphics[scale=0.5]{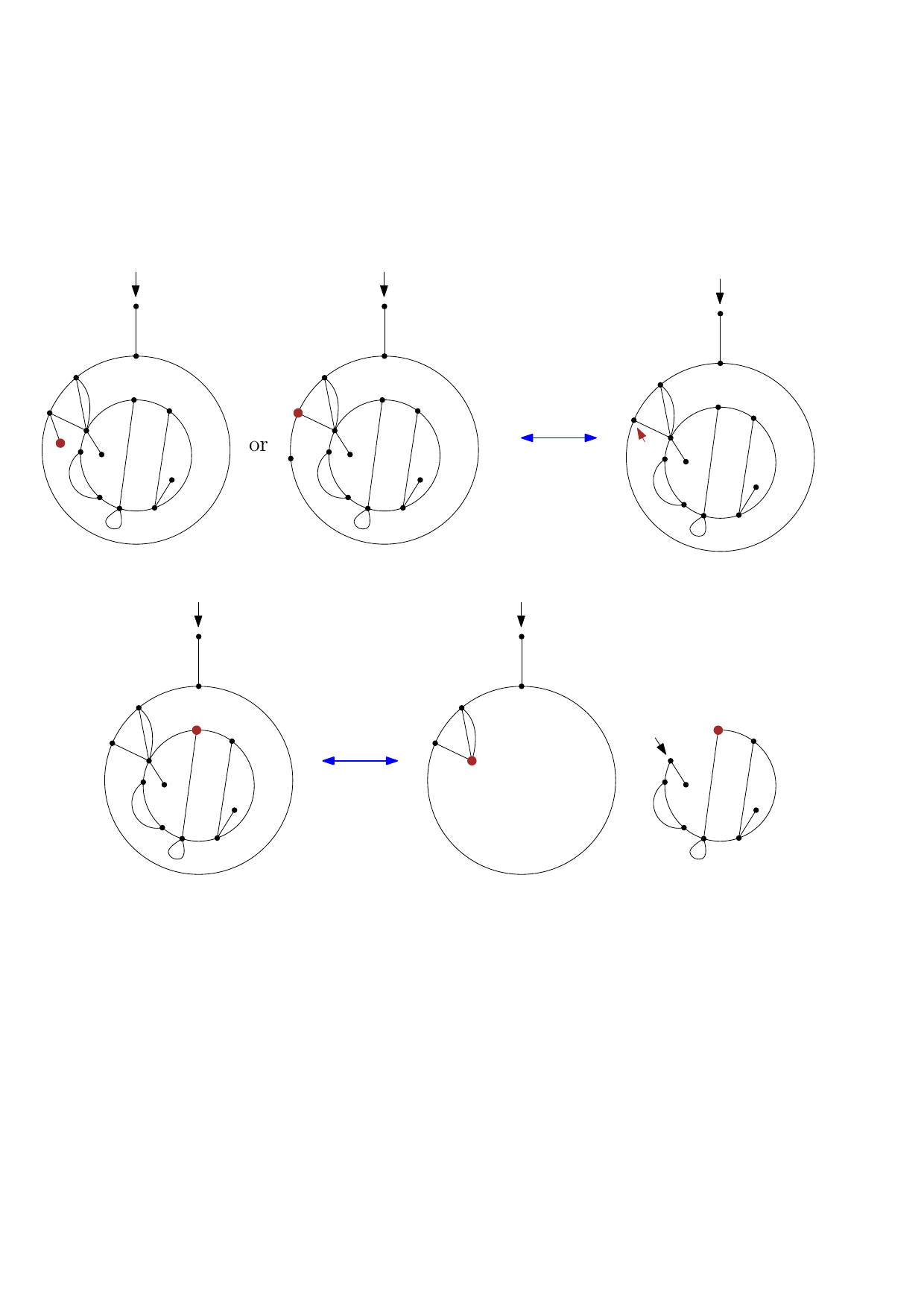}
\caption{The generalized R\'emy bijection. The case similar to trees (above), and the cut-and-slide case (below)}
\label{remy2}
\end{figure}
\end{bij}


\section{Proofs}

\label{sec:proof}


In this section we will present a proof that Bijections \ref{CS1} and \ref{CS2} are inverse to each other, thus implying that the cut-and-slide operation is a bijection. Then, together with the lemmas presented in Section \ref{sec:Remy}, it will be enough to prove that the generalized Rémy bijection (Bijection \ref{RM}) is indeed a bijection. To do so, we will describe what happens to the different faces of the map during the operation (see Figure \ref{faces}).

\begin{proof}[Proof of Lemma \ref{lem_inverse}]

We only prove Lemma \ref{lem_inverse} for Bijection \ref{CS1}. The proof for Bijection \ref{CS2} is very similar and can be copied almost verbatim.

Thanks to the exploration, we can consider faces as words on the alphabet of side-edges (each face is described by the list of the side-edges that lie inside it, ordered as they appear during the exploration). For any side-edge $e$, we will call $\overline{e}$ its opposite side-edge (i.e the other side of this edge). By abuse of notation we may also refer to the edge $e$ that is the edge made of the side-edges $e$ and $\overline{e}$, but we will stress on the fact that we are referring to an edge.

Fix a planar map $M$. Let $d_0$ be a discovery, $d_1$ its previous discovery (if it exists), and so on, $d_{i+1}$ being the previous discovery of $d_i$, as long as it exists. For all $i$, the discovery $d_i$ discovers the face $F_i$ while leaving face $F_{i+1}$. For all $i$, let $v_i$ be the discovery vertex corresponding to $d_i$, and let us tell apart both side-edges of the discovery: $d_i$ sits in $F_{i+1}$ and $\overline{d_i}$ sits in $F_i$. Let $k$ be the smallest $i$ such that $d_i$ is a disconnecting discovery. For all $i$ between $1$ and $k$, we can set $F_i=\overline{d_i}A_id_{i-1}B_i$, that is $A_i$ are the side-edges of $F_i$ encountered before $d_{i-1}$ in the exploration, and $B_i$ are the the side-edges of $F_i$ encountered after $d_{i-1}$ in the exploration. We also set $F_0=\overline{d_0}A_0$ and $F_{k+1}=\overline{d_{k+1}}Cd_kB_{k+1}C'$ where $B_{k+1}$ is the word of side-edges of $F_{k+1}$ that appear between $d_k$ and the last corner around $v_k$.\\
Now let us describe the cut-and-slide operation when $d_0$ is marked. First, for all $i<k$, all \textbf{edges} $d_i$ are opened into a bud $b_i$ and an stem $s_i$. The \textbf{edge} $d_k$ becomes a marked leaf $s_k$. Then $s_k$ becomes a stem, and $s_0$ becomes the marked leaf. Then $b_i$ is matched with $s_{i+1}$ to create an edge $e_i$. The map has been split into two distinct maps $M_1$ and $M_2$, now let us see what happened to the faces. In $M_1$, there is only one face that has been modified, it is the face in which lies the last corner of the marked vertex $v^*$, that we call $G^*$. It is straightforward to see that $G^*=\overline{d_{k+1}}CC'$ where the last corner of $v^*$ appears exactly between the end of $C$ and the beginning of $C'$. In $M_2$, the modified faces will be called $G_i$. We have $G_0=\overline{e_0}A_0s_0\overline{s_0}B_1$, $G_i=\overline{e_i}A_ie_{i-1}B_{i+1}$ and $G_k=A_ke_{k-1}B_{k+1}$. $G_k$ is the outer face of the second map, and for all $i$, $G_{i+1}$ is the previous face of $G_i$.\\
\begin{figure}[!h]
\captionsetup{width=0.8\textwidth}
\centering
\includegraphics[scale=1]{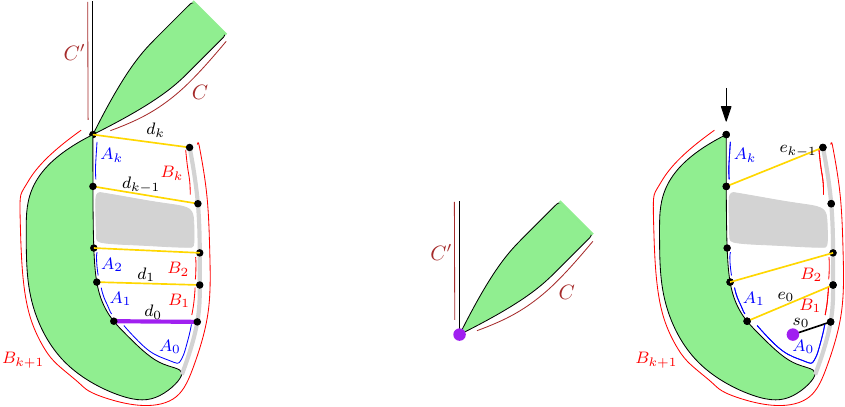}
\caption{What happens to the faces during the cut-and-slide operation. Marked objects are in fat purple, other discoveries involved are in gold. Blue contour materializes the $A_i$'s and red contour the $B_i$'s}
\label{faces}
\end{figure}\\
We must now prove that, for all $i<k$, the \textbf{edge} $e_i$ is a discovery that leaves $G_{i+1}$ and discovers $G_i$. We already know that $e_i$ lies in $G_{i+1}$ and $\overline{e_i}$ lies in $G_i$, we just have to prove that the \textbf{edge} $e_i$ is a discovery, i.e $e_i$ is the first side-edge $e$ of $G_{i+1}$ such that $\overline{e}$ lies in $G_i$. Assume the contrary. There must be a side-edge $f$ in $A_{i+1}$ such that $\overline{f}$ lies in $G_i$. There are two possibilities:
\begin{enumerate}
\item Either $\overline{f}$ belongs to $A_i$, but then in the original map, in $F_{i+1}$, $f$ comes before $d_i$ and $\overline{f}$ lies in $F_i$, thus $d_i$ could not be the discovery, in this case there would be a contradiction. 
\item Or $\overline{f}$ belongs to $B_{i+1}$, but then in the original map, in $F_{i+1}$ we see the side-edges $f$, then $d_i$, then $\overline{f}$ in that order. But since we deal with planar maps, an edge which has both sides in the same face is a cut-edge (removing it disconnects the map). But then in that configuration, $d_i$ must be a disconnecting discovery (see Figure \ref{moche}). Indeed, let $M^*$ be the connected component containing $d_i$ that we obtain after removing $e$. Then, in the map $M^*$, $d_i$ would be a discovery that leaves the outer face, thus disconnecting (because of Lemma \ref{lemma}). This is a contradiction, because $i<k$.
\end{enumerate}  We proved what we wanted. 

\end{proof}

\begin{figure}[!h]
\captionsetup{width=0.8\textwidth}
\centering
\includegraphics[scale=0.5]{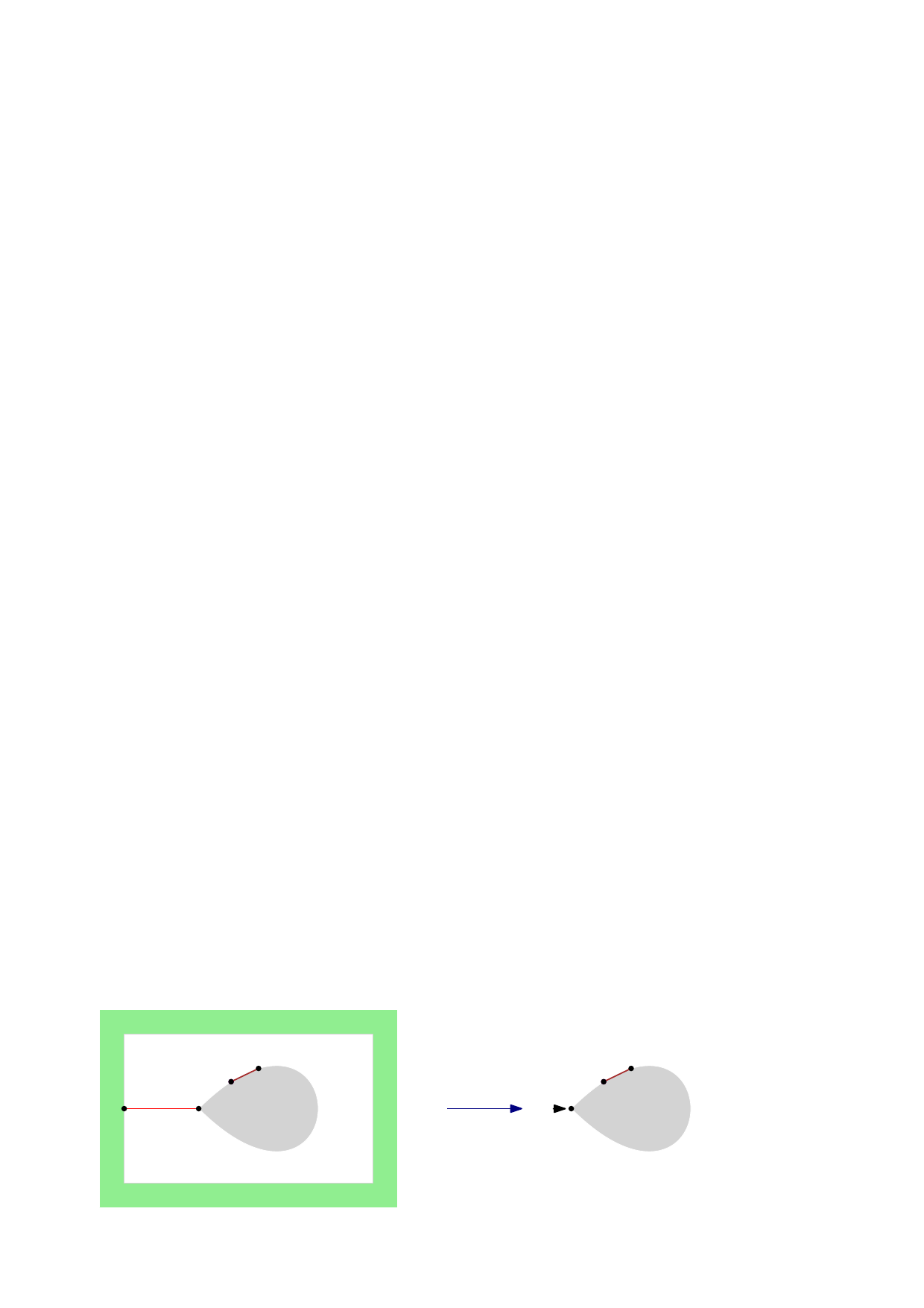}
\caption{The impossibility of a cut-edge separating a non-disconnecting discovery from the root. In red, the cut-edge, in brown the discovery}
\label{moche}
\end{figure}

Knowing exactly what happens to the faces and that the discoveries are in some sense "preserved" is enough to prove that both the cut-and-slide operation and its inverse work fine and are actually inverse operations of each other.

\section{Controling the degrees of the vertices}
\label{sec:deg}
Here we present an analogue of \eqref{cut-slide} with control over the degrees of the vertices. This is a result of independent interest that is not used in the other proofs of the paper.

If $\textbf{v}=(v_i)_{i\in \mathbb{N}}$ is a sequence of integers, for any $j>0$, we set $\delta_j(\textbf{v})=\textbf{w}$ where $w_j=v_j+1$ and $w_i =v_i$ for $i\neq j$, and $\delta_{-j}(\textbf{v})=\textbf{w'}$ where $w_j'=v_j-1$ and $w_i '=v_i$ for $i\neq j$. Finally, we set $\delta(\textbf{v},j_1,\hdots,j_k)=\delta_{j_1}\circ \hdots\circ \delta_{j_k}(\textbf{v})$.

Let $M(r,f,\textbf{v})$ be the number of planar maps with $f$ faces, with root of degree $r$, with $\textbf{v}=(v_i)_{i\in \mathbb{N}}$ such that there are $v_i$ vertices of degree $i$ (root included). The cut-and-slide operation only modifies the degrees at the marked leaf and the splitting vertex, so we can immediately derive this more precise formula:
\begin{thm}\label{thm_degrees}
\begin{align*}
(f-1)M(r,f,\textbf{v})=&\sum_{\substack{j,k\geq 1}}\sum_{\substack{\textbf{u},\textbf{w}}}\sum_{\substack{f_1+f_2=f\\f_1,f_2\geq 1}}(u_j-\mathbb{1}_{j=r})M(r,f_1,\textbf{u})(w_1-\mathbb{1}_{k=1})M(k,f_2,\textbf{w})\\
&+\sum_{\substack{j+k=r-1}}\sum_{\textbf{u}+\textbf{w}=\delta(\textbf{v},1,-r,k,j)}\sum_{\substack{f_1+f_2=f\\f_1,f_2\geq 1}}M(j,f_1,\textbf{u})(w_1-\mathbb{1}_{k=1})M(k,f_2,\textbf{w}).
\end{align*}
where the sum of the first line is over pairs of sequences of numbers $(\textbf{u},\textbf{w})$ $\textbf{u}+\textbf{w}=\\ \delta(\textbf{v},1,j,k,-(j+k+1))$.
\end{thm}

\begin{proof}
Notice that, in the cut-and-slide operation, very few of the vertex degrees are modified: there is the leaf that is created in $M_2$, and the disconnecting discovery vertex that is split in three. Other than that, for all other vertices, although some of their adjacent edges might be split, they are reattached somewhere else so their degree does not change.

Let $v$ be the vertex that is split in three: it gives birth to $v_1$, the marked vertex in $M_1$, $v_2$, the root vertex of $M_2$, and $v_3$, that after some possible transformation "becomes" the marked leaf in $M_2$. Say $v$ is of degree $j+k+1$, with $deg(v_1)=j$ and $deg(v_2)=k$.

The $(w_1-\mathbb{1}_{k=1})$ term means that the marked leaf in $M_2$ cannot be the root. Finally, there are two possible cases, depending on whether $v$ is the root of $M$ or not, each implying one term in the RHS.
\end{proof}

\begin{rem}
$\delta_j(\textbf{v})$ means there is one more vertex of degree $j$, and $\delta_{-j}(\textbf{v})$ means there is one less vertex of degree $j$. This notation is somehow complicated but avoids dealing with special cases, for instance in $\delta(\textbf{v},j,k)$ there is no problem if $j=k$, contrarily to saying something like $v_j'=v_j+1$, $v'_k=v_k+1$ and $v_i'=v_i$ for $i\neq j,k$.
\end{rem}
 Note that this recurrence formula allows us to compute the number of maps with bounded vertex degrees, i.e. it can be specialized to maps with vertex degrees $\leq d$ for some $d$. Restricting to vertex degrees $1$ and $3$, one recovers \eqref{precubic}.\\

Formula \eqref{remy} also has an analog where the degrees are recorded, but it involves more complicated cases and is less useful for enumeration, we leave it as an exercise to the reader.

\section{The proof of the Carrell-Chapuy formula in the planar case}
\label{sec:calcul}

We are now ready to prove the Carrell-Chapuy formula  in the planar case \eqref{planar}. We will prove this by induction on $n$, only knowing that $Q(0,f)=\mathbb{1}_{f=1}$.\\
Applying \eqref{cut-slide} to the dual map, we obtain the following formula, which will be helpful for the proof:
\begin{cor}

\begin{equation}
(v-1)Q(n,f)=\sum_{\substack{i+j=n-1\\i,j\geq 0}}\sum_{\substack{f_1+f_2=f+1\\f_1,f_2\geq 1}}(2i+1)Q(i,f_1)f_2Q(j,f_2)
\label{dual}
\end{equation}
\end{cor}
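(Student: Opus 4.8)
The plan is to derive \eqref{dual} from \eqref{cut-slide} by applying planar duality to every map occurring in the formula. Recall that $M \mapsto M^{\ast}$, the map sending a rooted planar map to its dual, is a bijection on rooted planar maps that fixes the number of edges and exchanges the numbers of vertices and faces; in particular $Q(n,f)=Q(n,2+n-f)$, so $Q(n,\cdot)$ counts maps indifferently by their number of faces or by their number of vertices. Under this bijection a marked vertex of $M$ becomes a marked face of $M^{\ast}$, a marked corner stays a marked corner (corners are self-dual), and the coefficient $f-1$ (the number of discoveries, one less than the number of faces) becomes, on the dual side, one less than the number of vertices. So \eqref{dual} is simply \eqref{cut-slide} read on the dual maps, and algebraically it amounts to a change of summation variables.

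Concretely, I would first rewrite the weights of \eqref{cut-slide} in dual terms: $v_1 Q(i,f_1)$, which counts maps with $i$ edges, $f_1$ faces and a marked vertex (there are $v_1=2+i-f_1$ of them), equals $v_1 Q(i,v_1)$, the number of maps with $i$ edges, $v_1$ faces and a marked face; while $(2j+1)Q(j,f_2)$ equals $(2j+1)Q(j,v_2)$ with $v_2=2+j-f_2$, since the number $2j+1$ of corners is a self-dual statistic. I would then change variables from $(f_1,f_2)$ to $(v_1,v_2)$. The index $i+j=n-1$ is untouched; the constraint $f_1+f_2=f$ becomes
\[
v_1+v_2 = 4+(i+j)-(f_1+f_2) = (2+n-f)+1 = v+1,
\]
and the conditions $f_1,f_2\geq 1$ translate into $1\leq v_1\leq i+1$ and $1\leq v_2\leq j+1$, which is exactly the support of the factors $Q(i,v_1)$ and $Q(j,v_2)$ (these vanish outside it). After the substitution \eqref{cut-slide} reads
\[
(f-1)\,Q(n,v) = \sum_{\substack{i+j=n-1\\ i,j\geq 0}}\sum_{\substack{v_1+v_2=v+1\\ v_1,v_2\geq 1}} v_1\,Q(i,v_1)\,(2j+1)\,Q(j,v_2),
\]
where now every $Q$ is read as counting maps by their number of faces.

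It remains to recognise this as \eqref{dual}. Writing $F:=v$ for the face-parameter of the left-hand map, its number of vertices is $2+n-F=f$, so the left-hand coefficient $f-1$ is precisely (number of vertices) $-1$, matching the coefficient $v-1$ of \eqref{dual}. Relabelling the dummy face-counts $v_1,v_2$ as $f_1,f_2$ turns the constraint into $f_1+f_2=F+1$, and finally I would use the symmetry of the index set $\{\,i+j=n-1,\ f_1+f_2=F+1\,\}$ under exchanging the two factors $(i,f_1)\leftrightarrow(j,f_2)$ to rewrite $f_1\,Q(i,f_1)\,(2j+1)\,Q(j,f_2)$ as $(2i+1)\,Q(i,f_1)\,f_2\,Q(j,f_2)$; this yields \eqref{dual} verbatim. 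I expect the only delicate point to be the bookkeeping of the statistics under duality — chiefly the claims that a marked vertex dualises to a marked face, that corners are self-dual, and that the transformed summation ranges agree with the original ones because $Q$ vanishes on non-admissible parameters.
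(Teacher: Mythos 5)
Your proof is correct and follows exactly the paper's approach: the paper derives \eqref{dual} in a single sentence, ``Taking the dual of \eqref{cut-slide}, we obtain the following formula,'' which is precisely your duality argument. Your write-up merely makes explicit the bookkeeping (marked vertex $\leftrightarrow$ marked face, $2j+1$ corners being edge-determined and hence self-dual, the change of summation variables, and the final symmetrization of the index set) that the paper leaves implicit.
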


Starting with \eqref{cut-slide}, and applying \eqref{remy}, then in a second time applying \eqref{cut-slide} backwards, we obtain

\begin{align*}
(f-1)Q(n,f)=&(2n-1)Q(n-1,f-1)+2(2n-1)Q(n-1,f)\\&+2\sum_{\substack{i+j=n-2\\i,j\geq 0}}\sum_{\substack{f_1+f_2=f\\f_1,f_2\geq 1}}(2i+1)Q(i,f_1)(2j+1)Q(j,f_2)\\&+\sum_{\substack{i+j+k=n-2\\i,j,k\geq 0}}\sum_{\substack{f_1+f_2+f_3=f\\f_1,f_2,f_3\geq 1}}(2i+1)Q(i,f_1)v_2Q(j,f_2)v_3Q(k,f_3)
\\&=(2n-1)Q(n-1,f-1)+2(2n-1)Q(n-1,f)\\&+2\sum_{\substack{i+j=n-2\\i,j\geq 0}}\sum_{\substack{f_1+f_2=f\\f_1,f_2\geq 1}}(2i+1)Q(i,f_1)(2j+1)Q(j,f_2)\\&+\sum_{\substack{i+j=n-1\\i,j\geq 0}}\sum_{\substack{f_1+f_2=f\\f_1,f_2\geq 1}}(f_1-1)Q(i,f_1)v_2Q(j,f_2).
\end{align*}
So adding \eqref{remy} to this
\begin{align*}
(n+1)Q(n,f)=&(2n-1)Q(n-1,f-1)+2(2n-1)Q(n-1,f)\\&+2\sum_{\substack{i+j=n-2\\i,j\geq 0}}\sum_{\substack{f_1+f_2=f\\f_1,f_2\geq 1}}(2i+1)Q(i,f_1)(2j+1)Q(j,f_2)\\&+\sum_{\substack{i+j=n-1\\i,j\geq 0}}\sum_{\substack{f_1+f_2=f\\f_1,f_2\geq 1}}(i+1)Q(i,f_1)v_2Q(j,f_2).
\end{align*}
Let \[S=\sum_{\substack{i+j=n-1\\i,j\geq 0}}\sum_{\substack{f_1+f_2=f\\f_1,f_2\geq 1}}(i+1)Q(i,f_1)v_2Q(j,f_2).\]
We want to prove 
\[S=(2n-1)Q(n-1,f-1)+\sum_{\substack{i+j=n-2\\i,j\geq 0}}\sum_{\substack{f_1+f_2=f\\f_1,f_2\geq 1}}(2i+1)Q(i,f_1)(2j+1)Q(j,f_2).\]
 We apply the recursion hypothesis:
\begin{align*}
S&=\sum_{\substack{i+j=n-1\\i,j\geq 0}}\sum_{\substack{f_1+f_2=f\\f_1,f_2\geq 1}}(2(2i-1)Q(i-1,f_1)+2(2i-1)Q(i-1,f_1-1))v_2Q(j,f_2)\\&+3\sum_{\substack{i+j+k=n-3\\k,l\geq 0}}\sum_{\substack{f_1+f_2+f_3=f\\f_1,f_2,f_3\geq 1}}(2i+1)Q(i,f_1)(2j+1)Q(j,f_2)v_3Q(k,f_3)\\&+vQ(n-1,f-1).
\end{align*}
But, according to \eqref{cut-slide}, \[\sum_{\substack{i+j=n-1\\i,j\geq 0}}\sum_{\substack{f_1+f_2=f\\f_1,f_2\geq 1}}(2(2i-1)Q(i-1,f_1-1))v_2Q(j,f_2)=2(f-2)Q(n-1,f-1),\]
and 
\begin{align*}
&\sum_{\substack{i+j+k=n-3\\k,l\geq 0}}\sum_{\substack{f_1+f_2+f_3=f\\f_1,f_2,f_3\geq 1}}(2i+1)Q(i,f_1)(2j+1)Q(j,f_2)v_3Q(k,f_3)\\&=\sum_{\substack{i+j=n-2\\i,j\geq 0}}\sum_{\substack{f_1+f_2=f\\ f_1,f_2\geq 1}}(2i+1)Q(i,f_1)(f_2-1)Q(j,f_2).
\end{align*}
So
\begin{align*}
S=&(2(f-2)+v)Q(n-1,f-1)+\sum_{\substack{i+j=n-2}}\sum_{\substack{f_1+f_2=f\\f_1,f_2\geq 1}}(2i+1)Q(i,f_1)(2j+1)Q(j,f_2)\\&+\sum_{\substack{i+j=n-2}}\sum_{\substack{f_1+f_2=f\\f_1,f_2\geq 1}}(2i+1)Q(i,f_1)f_2Q(j,f_2).
\end{align*}
But using \eqref{dual}, we have 
\[\sum_{\substack{i+j=n-2}}\sum_{\substack{f_1+f_2=f\\f_1,f_2\geq 1}}(2i+1)Q(i,f_1)f_2Q(j,f_2)=(v-1)Q(n-1,f-1),\]
which finishes the proof.

\begin{rem}
The proof above is not straightforward, and since it uses duality, our method cannot be applied to finding an extended Carrell-Chapuy formula with control over the degrees (if it even exists). 
\end{rem}

\section{A bijection for precubic maps with two faces}
\label{sec:twofaces}
In this section, we will give a sketch of the proof of a first step towards uniting higher genus and multiple faces: the case of two-faced precubic maps.

Similarly as \eqref{precubic}, we have a recurrence formula for higher genus precubic maps with two faces:
\begin{equation}
\label{deuxfaces}
(2g+1)\alpha_g(n,2)=\alpha^{(3)}_{g-1}(n,2)+\sum\limits_{g_1+g_2=g} \sum\limits_{i+j=n}\alpha^{(1)}_{g_1}(i,1)\alpha^{(1)}_{g_2}(j,1)
\end{equation}
where $\alpha_g(n,f)$ counts the number of precubic maps with $n$ edges and $f$ faces, of genus $g$, and $\alpha^{(k)}_g(n,f)$ counts the number of those maps with $k$ marked leaves.

We can define the exploration of a precubic map with two faces in the following way:
\begin{definition}
We will describe an exploration of the map as a canonical labeling of all its corners (see Fig. \ref{twofac}).

As in Definition \ref{explo}, we can define the discovery as the first edge adjacent to both faces that is encountered in a clockwise tour of the root face starting from the root, and the discovery vertex as the vertex that appears right before the discovery in this tour (there is only one discovery since there are 2 faces). Opening the discovery into a bud and a stem creates a blossoming (i.e. with a bud and a stem) unicellular map. In this map, it is possible to label all the corners in their order during the tour of the unique face. Thus, there is a labeling of all the corners of the map, and a discovery vertex (we will not need the discovery itself in what follows).
\end{definition}
The discovery vertex is obviously of degree $3$ (because a leaf is adjacent to only one face).  We can now introduce special vertices and \textit{trisections}.
\begin{definition}
A trisection is a vertex whose corner labels are in counterclockwise order around this vertex. A vertex is said to be \textit{special} if it is a trisection or if it is the discovery vertex.
\end{definition}
A trisection of the map is exactly a trisection of the unicellular (blossoming) map. In \cite{unicellular,trisections}, it is proven that in a unicellular map of genus $g$, there are $2g$ trisections. Furthermore, we can easily verify that the discovery vertex is not a trisection, thus the following lemma holds:
\begin{lem}
There are $2g+1$ special vertices in a two-faced precubic map of genus $g$ (see Fig. \ref{twofac} right).
\end{lem}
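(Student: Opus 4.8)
The plan is to count the special vertices directly by combining two facts already available in the excerpt: the count of trisections in a unicellular map, and the structural role of the discovery vertex. The statement I must prove is that a two-faced precubic map of genus $g$ has exactly $2g+1$ special vertices, where a special vertex is either a trisection or the discovery vertex.

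First I would reduce the two-faced map to a unicellular one. Opening the discovery into a bud and a stem (as in the definition preceding the lemma) turns the two-faced precubic map into a blossoming unicellular map of the same genus $g$. The labeling of corners of the original map was \emph{defined} to coincide with the order in which corners appear during the tour of the unique face of this unicellular map. Consequently, the notion of a trisection (a vertex whose corner labels are in counterclockwise order) is identical for the map and for its associated unicellular map: a vertex is a trisection of one exactly when it is a trisection of the other. This identification is asserted in the text just before the lemma, so I may invoke it.

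Next I would apply the known count. As cited in the excerpt (\cite{unicellular,trisections}), a unicellular map of genus $g$ has exactly $2g$ trisections. By the identification above, the two-faced precubic map therefore has exactly $2g$ trisections. The remaining task is to account for the discovery vertex and show it contributes exactly one additional special vertex, i.e. that it is genuinely distinct from all the trisections. By definition the discovery vertex is special, so it suffices to verify that it is \emph{not} itself a trisection. The text states this can be ``easily verified,'' and the natural argument is to examine the cyclic order of corner labels around the discovery vertex: since the discovery vertex is the vertex immediately preceding the discovery edge in the clockwise tour of the root face, and since opening the discovery inserts a bud/stem at this vertex without creating a counterclockwise label pattern there, its corner labels remain in clockwise order, so it fails the trisection condition.

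Combining these, the special vertices are precisely the $2g$ trisections together with the one discovery vertex, and since the discovery vertex is not among the trisections these sets are disjoint, giving a total of $2g+1$ special vertices. The main obstacle I anticipate is the last step: carefully checking that the discovery vertex is never a trisection, which requires tracking exactly how the three corner labels are assigned around this degree-$3$ vertex relative to the moment the discovery edge is opened. The first two steps are essentially bookkeeping that transports a citable unicellular result through a label-preserving correspondence, so the genuine content lies in confirming the disjointness, i.e.\ the non-trisection property of the discovery vertex.
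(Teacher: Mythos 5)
Your proposal is correct and follows essentially the same route as the paper: the paper likewise identifies trisections of the two-faced map with trisections of the blossoming unicellular map obtained by opening the discovery, invokes the cited result that a unicellular map of genus $g$ has exactly $2g$ trisections, and then observes (with no more detail than you give --- the paper says only ``we can easily verify'') that the discovery vertex is not itself a trisection, yielding the disjoint count $2g+1$. There is nothing to flag beyond noting that the verification you single out as the genuine content is also left unproved in the paper itself.
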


\begin{figure}[!h]

\captionsetup{width=0.8\textwidth}

\centering
\includegraphics[scale=1]{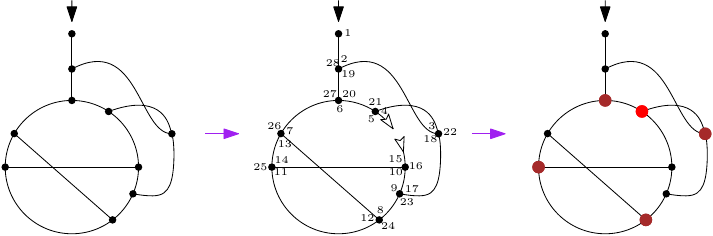}
\caption{A precubic map (left), its exploration (center), and its special points in fat (right, discovery vertex in red, trisections in brown)}
\label{twofac}
\end{figure}

The operation we will consider is fairly simple: take a map of genus $g$ with a marked special vertex, and split it.
There are two possible cases:\begin{enumerate}

\item the map is disconnected into two maps with a marked leaf each, such that the genii, number of edges and faces add up,
\item the resulting map is of genus $g-1$ and has $3$ marked leaves, along with information on how to glue back the leaves together (given $3$ leaves, there are $2$ possible ways of gluing them together).
\end{enumerate}

\begin{rem}
The first case can only appear if the special vertex is the discovery vertex. In the second case, the special vertex can be either the discovery vertex or a trisection.
\end{rem}
There is a bijection between maps in case (1) and pairs of maps with a marked leaf each, the inverse operation being the same as the case of disconnecting discoveries in the planar case: given two maps with a marked leaf each, it is possible to glue them back together as in Figure \ref{split}, and one obtains a map with a marked discovery vertex. 

Case (2) is more complicated: given a map of genus $g-1$ with $3$ marked leaves, that we will call a \textit{tripod}, there are two ways to glue back the leaves. A gluing is said to be \textit{valid} if the resulting map has genus $g$ and the gluing vertex is a special vertex. Tripods can have $0$, $1$ or $2$ valid gluings, and we will provide a classification of tripods with respect to their number of gluings. In the following, we will refer to "marked leaves" as just "leaves" as there is no risk of ambiguity. In order to prove \eqref{deuxfaces}, we will then have to prove there is a bijection between tripods with respectively $0$ and $2$ valid gluings.

\begin{definition}
In a given map, let $v$ be the discovery vertex. It has two corners in the root face, and is thus seen twice in the tour of the root face. We can decompose the root face $F$ as a word on side-edges as in Section \ref{sec:proof}, starting from the root, as $F=TvOv\overline{T}$ (see Fig. \ref{tripod} left).

Take a given tripod $M$, with exactly two leaves in the root face, both in $\overline{T}$. Glue those two leaves to split the root face in two. Let $F'$ be the face thus obtained that is not the root face. We say that $M$ is in the special case if no side-edge of $F'$ has its opposite side-edge in $T$  (see Fig. \ref{tripod} right).
\end{definition}
\begin{figure}[!h]
\captionsetup{width=0.8\textwidth}

\center
\includegraphics[scale=1]{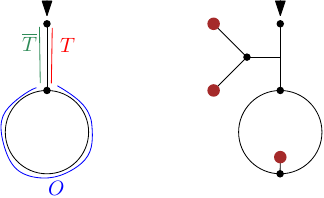}
\caption{The schematic decomposition of the root face (left), and the special case (right, with the leaves in fat brown)}
\label{tripod}
\end{figure}

\begin{lem} The following classification holds:
\begin{itemize}
\item A tripod with all leaves in the same face or in the special case or with two leaves in $O$ has $1$ valid gluing.
\item A tripod having exactly one leaf in the root face has $2$ valid gluings.
\item A tripod with exactly two leaves in the root face, at least one of which in $T$ or $\overline{T}$ (except for the special case), has no valid gluings.
\end{itemize}
\end{lem}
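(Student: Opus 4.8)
The plan is to realize a valid gluing as the inverse of the splitting operation of case (2): it identifies the three marked leaves of the tripod $M$ into a single degree‑$3$ vertex $v^*$, and there are exactly two such identifications, corresponding to the two cyclic orders of the three half‑edges around $v^*$. For each candidate gluing I would check separately the two requirements that make it valid, in two stages: first the topological requirement that the result have genus $g$ (equivalently, still exactly two faces), and then the combinatorial requirement that $v^*$ be a special vertex of the glued map. Since the tripod is precubic, each identification does produce a degree‑$3$ vertex, so the glued map is again a two‑faced precubic map.

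For the first stage I would work in the rotation‑system model. Each marked leaf $l_i$ carries a single corner, and in the tripod the face‑tracing makes a $U$‑turn there (both sides of the pendant edge lie in one face). Gluing the three leaves with a cyclic order $\pi$ replaces the three $U$‑turns by the rewiring $\pi$; if $\rho$ is the permutation of $\{1,2,3\}$ recording how the ambient face structure carries the out‑port of $l_j$ to the in‑port of $l_{\rho(j)}$, then the faces meeting the leaves are the cycles of $\rho\pi$ (the leaf‑free tripod faces, if any, persist). A one‑line Euler computation ($v$ drops by $2$, $e$ is unchanged, so genus $g$ $\iff$ two faces) reduces everything to counting these cycles. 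If all three leaves share a face then $\rho$ is a $3$‑cycle, and exactly one of the two cyclic orders gives one cycle of $\rho\pi$ (genus $g$) while the other gives three (genus $g-1$); if the leaves split $2$–$1$ then $\rho$ is a transposition times a fixed point, and both cyclic orders give two cycles, so both are genus‑valid. Hence there is one genus‑valid gluing when all leaves share a face and two in the $2$–$1$ case.

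The second stage is the heart of the argument and the main obstacle. Here I must decide, for each genus‑valid gluing, whether $v^*$ is a trisection or the discovery vertex of the glued map. A vertex is a trisection exactly when the tour labels of its corners run counterclockwise rather than clockwise (\cref{clockwise} being the genus‑$0$ baseline), so I would compute the tour order in which the exploration of the glued map visits the three corners of $v^*$ and compare it with the clockwise order fixed by $\pi$. The exploration of the glued map can be read off from that of the tripod: using the decomposition $F = TvOv\overline{T}$ of the root face around the tripod's discovery vertex $v$, the region ($T$, $O$, $\overline{T}$, or the non‑root face $F_o$) containing each leaf pins down the relative tour position of the corresponding corner of $v^*$, hence the cyclic order of the three labels; once the discovery edge is opened this is exactly the trisection/slicing analysis for unicellular maps of \cite{unicellular,trisections}. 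Matching label order against $\pi$ gives, case by case, the stated classification: if all leaves share a face the unique genus‑valid gluing always makes $v^*$ special ($1$ valid); if exactly one leaf is in the root face both genus‑valid gluings make $v^*$ special (in one it is the discovery vertex, in the other a trisection), giving $2$ valid; if exactly two leaves are in the root face the answer depends on their regions, yielding one valid gluing when both lie in $O$ and none when at least one lies in $T$ or $\overline{T}$.

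The delicate point is the special case, which is precisely the configuration with both root‑face leaves in $\overline{T}$ where the crude region analysis would return $0$ yet a valid gluing survives. To treat it I would pre‑glue only the two $\overline{T}$‑leaves, splitting $F$ into $F$ and the new face $F'$, and invoke the defining condition that no side‑edge of $F'$ has its opposite in $T$: this is exactly what forces the tour of the fully glued map to reach the corners of $v^*$ in a counterclockwise order, so $v^*$ becomes a trisection and one valid gluing replaces zero. With the three counts $0$, $1$, $2$ established, the classification is complete, and these counts are what the concluding step needs: a cancelling bijection between tripods with $0$ and with $2$ valid gluings then makes $\alpha^{(3)}_{g-1}(n-6,2)$ appear with coefficient $1$, proving \eqref{deuxfaces}.
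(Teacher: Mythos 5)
The paper itself omits the proof of this lemma (it says only that it is ``done by carefully considering the cases implied''), so there is no detailed argument to compare yours against; judged as a self-contained proof, your proposal has a genuine gap. What you actually establish is only the easy half: that a gluing amounts to choosing one of the two cyclic orders $\pi$ at the new vertex $v^*$, and the Euler-characteristic/face-counting argument (your stage~1, which is correct) showing that exactly one gluing is genus-valid when all three leaves lie in one face, and both are genus-valid when the leaves split $2$--$1$ between the faces. But the content of the lemma is your stage~2 --- deciding, for each genus-valid gluing, whether $v^*$ is a special vertex of the glued map --- and there you only restate the conclusions (``matching label order against $\pi$ gives, case by case, the stated classification''; the special-case condition ``is exactly what forces'' the counterclockwise order) without any argument. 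In particular, the counts $2$, $1$, $0$ in the three $2$--$1$ subcases (one leaf in the root face; two leaves in $O$ or the special case; at least one leaf in $T$ or $\overline{T}$ otherwise) are precisely what has to be proved, and nothing in the proposal derives them.

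The step you wave through is also the genuinely delicate one. Whether $v^*$ is a trisection is determined by the labels of its corners in the exploration of the \emph{glued} map, and that exploration cannot simply be ``read off'' from the tripod's: gluing merges and re-splits the two faces, so the root face, the discovery edge (the first edge of the new root-face tour adjacent to both faces), and hence the entire corner labeling can all change, in a way that depends on which gluing was chosen; one must also determine separately whether $v^*$ is the new discovery vertex, since that is the other way of being special. Justifying that the regions $T$, $O$, $\overline{T}$ and the non-root face of the tripod pin down the label order at $v^*$ after gluing --- and then actually running the case analysis, including the special case --- is the proof, and it is absent. As it stands, your text is a plausible plan whose correct parts (the two-gluing setup and the genus criterion) are the routine ones, while every statement specific to the classification is asserted rather than proven.
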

The proof of this lemma is done by carefully considering the cases implied (we omit it here).

To finish the proof, one needs to find a bijection between tripods with $0$ valid gluings and tripods with $2$ valid gluings. The bijection consists of cleverly "unplugging" the discovery and plugging it somewhere else to "transfer" leaves between the faces. The proof involves several cases and is a bit technical, thus we prefer to omit it. This shows that on average, a tripod has one valid gluing, implying \eqref{deuxfaces}.

\section*{Acknowledgements}
The author wishes to thank his advisor Guillaume Chapuy for suggesting the problem and for useful discussions, as well as the anonymous reviewers, whose comments allowed to make this article clearer.
\section*{Funding}
This work was supported by ERC-2016-STG 716083 "CombiTop".
\bibliography{LOUFPlanarMaps.bib}{}
\bibliographystyle{plain}





 \end{document}